\newcommand{\isep}{\mathrel{{.}\,{.}}\nobreak}
\DeclareSymbolFont{cyrletters}{OT2}{wncyr}{m}{n}
\DeclareMathSymbol{\Sha}{\mathalpha}{cyrletters}{"58}
\newtheorem{theorem}{Theorem}
\newtheorem{lemma}[theorem]{Lemma}
\newcommand{\Fq}{\mathbb{F}_q}
\newcommand{\A}{\mathbf{A}}
\newcommand{\bC}{\mathbb{C}}
\newcommand{\SC}{\mathrm{SC}}
\newcommand{\tr}{\mathrm{Tr}}
\newcommand{\Tr}{{\bf Tr}}
\newcommand{\ep}{\varepsilon}
\newcommand{\E}{\mathbb{E}}
\newcommand{\G}{\mathcal{G}_n}
\newcommand{\M}{\mathcal{M}}
\newcommand{\C}{\mathcal{C}}
\newcommand{\Cb}{\C^\bot}
\newcommand{\D}{\mathcal{D}}
\newcommand{\F}{\mathbb{F}}
\renewcommand{\P}{\mathbb{P}}
\newcommand{\om}{\omega}
\newcommand{\db}{d^\bot}
\newcommand{\supp}{\mathrm{supp}}
\renewcommand{\l}{\ell}
\newcommand{\Gl}{G^{(\l)}}
\newcommand{\Gnl}{\G^{(\l)}}
\newcommand{\Phil}{\Phi_n^{(\l)}}
\newcommand{\rsc}{\varrho_\SC}
\newcommand{\Al}{A_\l}
\newcommand{\Yl}{Y_\l}
\newcommand{\Zl}{Z_\l}
\newcommand{\Ml}{M_n^{(\l)}}
\newcommand{\dl}{\Delta_\l}
\newcommand{\an}{\alpha_n}
\renewcommand{\i}{\mathrm{i}}
\newcommand{\weta}{\widetilde{\eta}}
\renewcommand{\d}{\mathrm{d}}
\newcommand{\hmu}{\hat{\mu}_n}
\newcommand{\hs}{\hat{s}_n}
\newcommand{\pa}{\partial}
\newcommand{\ml}{\mathbf{m}_{\l}}
\newcommand{\st}{\Gamma_{\tau}}
\newcommand{\stt}{\mathbf{S}_{\tau}}
\newcommand{\OpI}{\Omega_{p,I}}
\newcommand{\lt}{\mathbf{L}_{\tau,\ep}}
\newcommand{\lttt}{\mathbf{L}_\tau}
\newcommand{\I}{\mathcal{I}}
\title{Random Matrices from Linear Codes and the Convergence to Wigner's Semicircle Law}
 \author{Chin Hei Chan and Maosheng Xiong}
\begin{document}
\maketitle
\begin{abstract}
Recently we considered a class of random matrices obtained by choosing distinct codewords at random from linear codes over finite fields and proved that under some natural algebraic conditions their empirical spectral distribution converges to Wigner's semicircle law as the length of the codes goes to infinity. One of the conditions is that the dual distance of the codes is at least 5. In this paper, employing more advanced techniques related to Stieltjes transform, we show that the dual distance being at least 5 is sufficient to ensure the convergence, and the convergence rate is of the form $n^{-\beta}$ for some $0 < \beta < 1$, where $n$ is the length of the code.

\end{abstract}
\begin{keywords}
Group randomness, linear code, dual distance, empirical spectral measure, random matrix theory, Wigner's semicircle law.
\end{keywords}
\section{Introduction}\label{intro}
Random matrix theory is the study of matrices whose entries are random variables. Of particular interest is the study of eigenvalue statistics of random matrices such as the empirical spectral measure. It has been broadly investigated in a wide variety of areas, including statistics \cite{Wis}, number theory \cite{MET}, economics \cite{econ}, theoretical physics \cite{Wig} and communication theory \cite{TUL}.

Most of the matrix models considered in the literature were matrices whose entries have independent structures. In a series of work (\cite{Tarokh2,Tarokh3,OQBT}), initiated in \cite{Tarokh1}, the authors studied a class of matrices formed by choosing codewords at random from linear codes over finite fields and ultimately proved the convergence of the empirical spectral distribution of their Gram matrices to the Marchenko-Pastur law under the condition that the minimum Hamming distance of the dual codes is at least 5. This is the first result relating the randomness of matrices from linear codes to the algebraic properties of the underlying dual codes, and can be interpreted as a joint randomness test for sequences from linear codes. It implies in particular that sequences from linear codes with desired properties behave like random sequences from the view point of random matrix theory. This is called a ``group randomness'' property in \cite{Tarokh1} and may have many applications (see \cite{WigCode,WigM} from a different perspective). 

Recently we considered a distinct normalization of matrices obtained in a similar fashion from linear codes and proved the convergence of the empirical spectral distribution to the Wigner's semicircle law under some natural algebraic conditions of the underlying codes (see \cite{CSC}). This is also a group randomness property of linear codes. In this paper we explore this new phenomenon much further.

\subsection{Statement of Main Results}

To describe our results more precisely, we need some notation. Let $\mathscr{C}=\{\C_i : i \ge 1\}$ be a family of linear codes of length $n_i$ and dimension $k_i$ over the finite field $\Fq$ of $q$ elements ($\C_i$ is called an $[n_i,k_i]_q$ code for short), where $q$ is a prime power. The most interesting case is binary linear codes, corresponding to $q=2$. Denote by $\C_i^\bot$ the dual code of $\C_i$ and $d_i^\bot$ the Hamming distance of $\C_i^\bot$. $d_i^\bot$ is also called the \emph{dual distance} of $\C_i$.

The standard additive character of $\F_q$ extends component-wise to a natural mapping $\psi: \F_q^{n_i} \to (\mathbb{C}^*)^{n_i}$. For each $i$, we choose $p_i$ distinct codewords from $\C_i$ and apply the mapping $\psi$. Endowing with uniform probability on the choice of the $p_i$ codewords, this forms a probability space. Put the $p_i$ distinct sequences as the rows of a $p_i \times n_i$ random matrix $\Phi_{\C_i}$. Denote
\begin{equation}\label{Gram}
\mathcal{G}_{\C_i}=\frac{1}{n_i}\Phi_{\C_i}\Phi_{\C_i}^*,
\end{equation}
where $\Phi_{\C_i}^*$ is the conjugate transpose of the matrix $\Phi_{\C_i}$ and define
\begin{equation}\label{Mn}
M_{\C_i}=\sqrt\frac{n_i}{p_i}(\mathcal{G}_{\C_i}-I_{p_i}).
\end{equation}
Here $I_{p_i}$ is the $p_i \times p_i$ identity matrix.

For any $n \times n$ matrix $\A$ with eigenvalues $\lambda_1,\ldots,\lambda_n$, the \emph{spectral measure} of $\A$ is defined by
$$\mu_\A=\frac{1}{n}\sum_{j=1}^n \delta_{\lambda_j},$$
where $\delta_{\lambda}$ is the Dirac measure at the point $\lambda$. The \emph{empirical spectral distribution} of $\A$ is defined by
$$F_\A(x):=\int_{-\infty}^x \mu_\A(\text{d}x).$$

Our first main result is as follows:
\begin{theorem} \label{thm}
Suppose $p_i, \frac{n_i}{p_i} \to \infty$ simultaneously as $i \to \infty$. If $\db_i \geq 5$ for any $i$, then as $i \to \infty$, we have
    \begin{eqnarray} \label{1:eq1} \mu_{n_i}(\I) \to \rsc(\I) \quad \emph{ in Probability},\end{eqnarray}
    and the convergence is uniform for all intervals $\I \subset \mathbb{R}$. Here $\mu_{n_i}$ is the spectral measure of the matrix $M_{\C_i}$ and $\varrho_\SC$ is the probability measure of the semicircle law whose density function is given by
\begin{equation}\label{scpdf}
\d\rsc(x)=\frac{1}{2\pi}\sqrt{4-x^2}\mathbbm{1}_{[-2,2]}\,\d x,
\end{equation}
and $\mathbbm{1}_{[-2,2]}$ is the indicator function of the interval $[-2,2]$.
\end{theorem}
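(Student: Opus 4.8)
The plan is to establish convergence of the Stieltjes transform, following the resolvent approach. For $z$ in the upper half-plane $\mathbb{C}^{+}=\{z\in\mathbb{C}:\Im z>0\}$ write $s_n(z)=\int(x-z)^{-1}\,\mu_{n_i}(\d x)=\frac1{p_i}\tr\bigl((M_{\C_i}-zI_{p_i})^{-1}\bigr)$ and let $s(z)=\int(x-z)^{-1}\,\d\rsc(x)$, the unique root in $\mathbb{C}^{+}$ of $s^{2}+zs+1=0$. It suffices to show that for each fixed $z\in\mathbb{C}^{+}$ one has $s_n(z)\to s(z)$ in probability: by the continuity theorem for Stieltjes transforms this forces $\mu_{n_i}\to\rsc$ weakly in probability, and since $\rsc$ has a bounded density, hence a uniformly continuous distribution function, weak convergence in probability upgrades to $\sup_{x\in\mathbb{R}}|F_{M_{\C_i}}(x)-F_{\rsc}(x)|\to0$ in probability, which is exactly the asserted uniform convergence $\mu_{n_i}(\I)\to\rsc(\I)$ over all intervals $\I$. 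From now on we fix $i$, suppress it, and abbreviate $M=M_\C$, $G=(M-zI)^{-1}$, and $x^{(a)}=\psi(c^{(a)})\in(\mathbb{C}^*)^n$, where $c^{(1)},\dots,c^{(p)}$ are the chosen codewords.

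Two facts do the work. First, because $\psi$ is a character and $|\psi(\cdot)|=1$, the matrix $M$ has zero diagonal and $M_{ab}=\tfrac1{\sqrt{np}}\langle x^{(a)},x^{(b)}\rangle$ for $a\ne b$. Second, $\db\ge5$ means that every nonzero $v\in\F_q^n$ with $\mathrm{wt}(v)\le4$ lies outside $\Cb$, so for a uniformly random $c\in\C$ and such a $v$ we have $\mathbb{E}[\psi(\langle v,c\rangle)]=0$; consequently the coordinates of $x^{(a)}$ are centred, of unit modulus and unit variance, and ``$4$-wise independent'' in the sense that every mixed moment of at most four of them factorises as in the i.i.d.\ case. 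The codewords are chosen distinct rather than independently, but the sphere-packing bound forces $|\C|\gtrsim n^2$ when $\db\ge5$, so $p/|\C|=o(1)$, and conditioning on the other codewords perturbs these identities by at most $O(p/|\C|)=o(1)$.

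Now to the argument. \emph{Concentration.} Changing one codeword perturbs $M$ by rank $2$, hence changes $s_n(z)$ by $O\bigl(1/(p\,\Im z)\bigr)$; a Doob martingale over the sequential choice of the $p$ distinct codewords together with the Azuma--Hoeffding inequality gives $\mathrm{Var}\bigl(s_n(z)\bigr)=O\bigl(1/(p(\Im z)^{2})\bigr)\to0$, so it remains to identify $\lim\mathbb{E}\,s_n(z)$. \emph{Self-consistent equation.} By the Schur complement, $G_{aa}=\bigl(-z-Z_a\bigr)^{-1}$ with $Z_a=\sum_{b,c\ne a}M_{ab}(G^{(a)})_{bc}M_{ca}$, where $G^{(a)}$ is the resolvent of the $a$-th principal minor; expanding, $Z_a=\tfrac1{np}(x^{(a)})^{*}B x^{(a)}$ is a quadratic form in $x^{(a)}$ whose kernel $B$ --- an $n\times n$ matrix built from $G^{(a)}$ and the remaining rows --- does not involve $c^{(a)}$. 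Using $\mathbb{E}[x^{(a)}_j\overline{x^{(a)}_{j'}}\mid\text{others}]=\delta_{jj'}+o(1)$ one finds $\mathbb{E}[Z_a\mid\text{others}]=\tfrac1p\tr G^{(a)}+\tfrac1{\sqrt{np}}\sum_{b\ne c}(G^{(a)})_{bc}M_{bc}+o(1)$, and the $4$-wise independence yields a Hanson--Wright-type estimate $\mathrm{Var}(Z_a\mid\text{others})\lesssim\|B\|_F^{2}/(np)^{2}$; bounding $\|B\|_F$ through $\|G^{(a)}\|\le1/\Im z$ together with an a priori spectral bound $\|\mathcal{G}_\C\|=O(1)$ (or, more crudely, just $\mathbb{E}\|M\|_F^{2}=O(p)$, which already suffices for the qualitative claim) gives $\mathrm{Var}(Z_a\mid\text{others})\to0$. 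The remaining off-diagonal term is negligible because $\bigl|\sum_{b\ne c}(G^{(a)})_{bc}M_{bc}\bigr|\le\|G^{(a)}\|_F\,\|M\|_F=O(p/\Im z)$, so $\tfrac1{\sqrt{np}}\bigl|\sum_{b\ne c}(G^{(a)})_{bc}M_{bc}\bigr|=O\bigl(\sqrt{p/n}/\Im z\bigr)\to0$ (this uses $n/p\to\infty$), and interlacing gives $\bigl|\tfrac1p\tr G^{(a)}-s_n(z)\bigr|=O\bigl(1/(p\,\Im z)\bigr)$. Combining, $Z_a=s_n(z)+o_{\mathbb{P}}(1)$, uniformly enough in $a$ that averaging $G_{aa}$ over $a$ and using $|G_{aa}|,\,|(-z-s_n)^{-1}|\le1/\Im z$ yields $s_n(z)=\bigl(-z-s_n(z)\bigr)^{-1}+o_{\mathbb{P}}(1)$, i.e.\ $s_n(z)^{2}+z\,s_n(z)+1\to0$ in probability; since $z\in\mathbb{C}^{+}$ stays away from the branch points $\pm2$, this quadratic is stable and therefore $s_n(z)\to s(z)$.

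The delicate part is the self-consistent step: making every error term above --- those coming from the distinctness constraint, from having only $4$-wise rather than full independence of the coordinates of $x^{(a)}$, and from the off-diagonal contributions --- genuinely $o(1)$, uniformly in $a$ and with a controlled dependence on $\Im z$. Here $\db\ge5$ is exactly the right threshold: the conditional mean of $Z_a$ needs only the vanishing of $2$nd-order character sums ($\db\ge3$), while its conditional variance needs the $4$th-order ones ($\db\ge5$), and no hypothesis on higher-weight dual codewords enters --- which is why the additional algebraic conditions imposed in \cite{CSC} can be discarded here. For the quantitative refinement (the rate $n^{-\beta}$) one additionally needs the genuine operator-norm bound $\|M_\C\|=O(1)$, equivalently $\|\mathcal{G}_\C-I_p\|_{\mathrm{op}}=O(\sqrt{p/n})$; this is proved by expanding $\mathbb{E}\,\tr\bigl((\mathcal{G}_\C-I_p)^{2m}\bigr)$ for $m$ of order $\log p$ into closed walks and character sums, where the local tree-like configurations reproduce the i.i.d.\ count, while for the remaining configurations forcing a weight-$\ge\db$ vector into $\Cb$ instead of pairing columns costs at least one free column index --- hence a factor $n^{-1}$ --- so those terms are of strictly lower order. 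That combinatorial bookkeeping, carried out uniformly in $m$ and tracked against $\Im z$, is the technical core of the argument.
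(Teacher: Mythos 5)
Your route is essentially the paper's: Schur complement for the diagonal Green function entries, the observation that $\db\ge 5$ kills exactly the order-two character sums needed for the conditional mean of the quadratic form $Z_a$ and the order-four sums needed for its variance, interlacing to replace $\frac1p\Tr G^{(a)}$ by $s_n(z)$, a bounded-difference (Azuma/McDiarmid) concentration for the Stieltjes transform, and stability of the quadratic $u^2+zu+1=0$; the endgame differs only cosmetically (you use pointwise-in-$z$ convergence plus the continuity theorem and a P\'olya-type upgrade, the paper proves uniformity in $z$ by a lattice argument and invokes the transform--measure equivalence), and your closing remarks about an operator-norm bound via traces of high powers concern the rate statement, not this theorem, and are not how the paper obtains the rate either.

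The one step that does not hold as written is your treatment of sampling without replacement. You stay in the distinct-codeword model and assert that conditioning on the other codewords perturbs the moment identities by $O(p/\#\C)=o(1)$. That is true for each individual character sum, but it is applied inside the conditional second-moment bound for $Z_a$, where the non-paired index quadruples number $\Theta(n^4)$: writing $Z_a=\frac1{np}(x^{(a)})^*Bx^{(a)}$, the accumulated error is of size
\begin{equation*}
\frac{p}{\#\C}\cdot\frac{1}{(np)^2}\Bigl(\sum_{j\ne k}|B_{jk}|\Bigr)^2\;\le\;\frac{p}{\#\C}\cdot\frac{n^2\|B\|_F^2}{(np)^2},
\end{equation*}
and since $\|B\|_F^2$ is typically of order $n^2p\,\eta^{-2}$ this is of order $n^2/(\#\C\,\eta^2)$. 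The sphere-packing bound only guarantees $\#\C\gtrsim n^2$, so for codes with $\#\C\asymp n^2$ this error is $\asymp\eta^{-2}$, not $o(1)$, and your variance estimate for $Z_a$ (hence the self-consistent equation) is not closed. The standard fix is the paper's: compare the distinct model with the i.i.d.\ model once and for all --- they differ by $O(p^2/\#\C)=o(1)$ in total variation because $p=o(n)$ and $\#\C\gtrsim n^2$ --- and then all moment identities in Lemma \ref{cor} are exact, after which the rest of your argument goes through.
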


We remark that originally in \cite{CSC} the same convergence (\ref{1:eq1}) was proved with an extra condition that there is a fixed constant $c > 0$ independent of $i$ such that
\begin{equation} \label{1:srip}
|\langle v,v' \rangle| \leq c\sqrt{n_i}, \quad \mbox{ for any } v \ne v' \in \psi(\C_i).
\end{equation}
The condition (\ref{1:srip}) is natural as explained in \cite{CSC}, and when $q=2$, it is equivalent to
\[\left|\mathrm{wt}(\mathbf{c})-\frac{n_i}{2}\right| \le \frac{c}{2} \sqrt{n_i}, \quad \forall \mathbf{c} \in \C_i \setminus \{\mathbf{0}\},\]
where $\mathrm{wt}(\mathbf{c})$ is the Hamming weight of the codeword $\mathbf{c}$. It is interesting that this extra condition can be dropped. Now the result of Theorem \ref{thm} has the same strength as that of \cite{OQBT} where the condition $d_i^\bot \ge 5$ alone is sufficient to ensure the convergence. It shall be noted that similar to \cite{OQBT}, the condition $d_i^\bot \ge 5$ in Theorem \ref{thm} is optimal because if $d_i^\bot=4 \, \forall i$, then Conclusion (\ref{1:eq1}) is false for first-order binary Reed-Muller codes which have dual distance $4$.

Our second main result shows that the rate of convergence (\ref{1:eq1}) is fast with respect to the length of the codes.

\begin{theorem} \label{thm1-2}
Let $\C$ be an $[n,k]_q$ code with dual distance $d^\bot \ge 5$. For fixed constants $\gamma_1,\gamma_2 \in (0,1)$ and $c \ge 1$, suppose $p$ and $n$ satisfy
\[c^{-1} n^{\gamma_1} \leq p \leq c \, n^{\gamma_2}.\]
Then
\begin{equation}\label{SD}
\left|\mu_{n}(\I)-\rsc(\I)\right| \prec n^{-\beta}
\end{equation}
uniformly for all intervals $\I \subset \mathbb{R}$, where $\beta>0$ is given by
\begin{equation}\label{beta}
\beta:=\min\left\{\frac{\gamma_1}{4},\frac{1-\gamma_2}{8} \right\}.
\end{equation}
\end{theorem}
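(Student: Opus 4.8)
The plan is to prove Theorem~\ref{thm1-2} via the Stieltjes transform method with quantitative control. For a probability measure $\mu$ on $\mathbb{R}$, write $s_\mu(z) = \int (x-z)^{-1}\,\d\mu(x)$ for $z = E + \i\eta$ with $\eta > 0$. Let $\hs_n(z)$ denote the Stieltjes transform of $\mu_n$ (the empirical spectral measure of $M_{\C}$) and let $s_{\SC}(z)$ be the Stieltjes transform of $\rsc$, which satisfies the self-consistent equation $s_{\SC}(z)^2 + z\,s_{\SC}(z) + 1 = 0$. The first step is to establish a self-consistent equation \emph{approximately} satisfied by $\E\,\hs_n(z)$ — or better, by $\hs_n(z)$ itself with high probability — of the form
\begin{equation}\label{plan:sce}
\hs_n(z)^2 + z\,\hs_n(z) + 1 = \delta_n(z),
\end{equation}
where $\delta_n(z)$ is an error term that we must bound by something like $n^{-\beta'}/(\eta^{c'})$ for appropriate exponents, using the hypothesis $d^\bot \ge 5$ together with the regime $c^{-1}n^{\gamma_1} \le p \le c\,n^{\gamma_2}$. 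The dual distance condition is what controls the relevant moment/combinatorial sums: as in \cite{CSC,OQBT}, $d^\bot \ge 5$ guarantees that low-degree correlations among the chosen codewords behave as if the rows were genuinely independent, which is exactly what is needed to run the resolvent expansion (Schur complement / leave-one-row-out identity for $M_{\C}$).

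**From the self-consistent equation to a bound on the Stieltjes transform difference.**

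Given \eqref{plan:sce}, the second step is a stability analysis: because $s_{\SC}$ solves the equation exactly, standard manipulation of the quadratic shows
\[
\left|\hs_n(z) - s_{\SC}(z)\right| \le \frac{C\,|\delta_n(z)|}{\sqrt{\max\{\kappa + \eta, \ |\delta_n(z)|\}}},
\]
where $\kappa = \mathrm{dist}(E,[-2,2])$; away from the spectral edges this is just $|\hs_n - s_{\SC}| \lesssim |\delta_n(z)|$. So on a suitable contour with $\eta = \eta_n := n^{-\theta}$ for a carefully chosen $\theta$, one gets $|\hs_n(z) - s_{\SC}(z)| \prec n^{-\beta}$ after balancing $\theta$ against the two constraints: $\eta_n$ cannot be too small (else the error $\delta_n$, which carries negative powers of $\eta$, blows up) and cannot be too large (else the final smoothing step loses too much). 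The exponents $\gamma_1/4$ and $(1-\gamma_2)/8$ in \eqref{beta} should emerge here: the $\gamma_1$ branch from the lower bound on $p$ (more rows $\Rightarrow$ better concentration, bounding fluctuations of $\hs_n$ around its mean), and the $\gamma_2$ branch from the upper bound on $p$ (keeping $p/n$ small enough that the Marchenko–Pastur-type correction is negligible and $M_{\C}$ stays close to a Wigner matrix); the factors $4$ and $8$ come from the square root in the edge bound and from the $\eta$-to-Kolmogorov-distance conversion.

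**Converting to the Kolmogorov distance and concluding.**

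The third step is a standard but essential bound (a quantitative Berry–Esseen–type inequality for spectral distributions, e.g. the Bai inequality) that controls $\sup_{\I}|\mu_n(\I) - \rsc(\I)|$ — equivalently $\|F_{M_\C} - F_{\SC}\|_\infty$ — in terms of an integral of $|\hs_n(E+\i\eta) - s_{\SC}(E+\i\eta)|$ over $E$ at the fixed height $\eta = \eta_n$, plus terms measuring how much mass $\rsc$ puts in intervals of length $\eta_n$ (which is $O(\sqrt{\eta_n})$ near the edges because of the square-root vanishing of the semicircle density). Feeding in the bound from the second step and optimizing $\theta$ yields \eqref{SD} with $\beta$ as in \eqref{beta}. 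To upgrade from "in probability / $\prec$" to the stochastic-domination statement uniformly in $\I$, one needs the bound on $\delta_n(z)$ to hold with overwhelming probability simultaneously on an $n^{O(1)}$-net of $z$-values on the contour, then extends to all $\I$ by monotonicity of distribution functions and Lipschitz continuity in $z$.

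**Main obstacle.**

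The crux — and the place where the dual-distance hypothesis does its real work — is the first step: establishing \eqref{plan:sce} with an error $\delta_n(z)$ whose dependence on $n$ and $\eta$ is good enough to survive the optimization. Unlike a genuine Wigner matrix, the rows of $\Phi_{\C}$ are neither independent nor do the entries have all moments matching; the Schur-complement expansion produces, besides the usual self-consistent term, extra sums over pairs/triples/quadruples of rows and over code-coordinate correlations. Showing that $d^\bot \ge 5$ forces all of these down to the required size — while tracking the powers of $p$, $n/p$, and $1/\eta$ explicitly rather than merely $o(1)$ — is the technical heart of the argument and the main difference from \cite{CSC}, where \eqref{1:srip} was available as a crutch.
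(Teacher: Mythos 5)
Your roadmap follows the same route as the paper: an approximate self-consistent equation for the Stieltjes transform with an error term controlled by $\db\ge 5$, a concentration bound for the fluctuation of $s_{M_n}(z)$ about its mean, a lattice/net argument to make the bound uniform in $z$, and a conversion from Stieltjes-transform estimates at scale $\eta\sim n^{-\beta}$ to estimates on intervals (you propose a Bai/Berry--Esseen-type inequality, the paper uses the Helffer--Sj\"ostrand formula; either conversion is viable). But there is a genuine gap: the decisive quantitative input --- the analogue of Theorem \ref{thm2}, i.e.\ the bound $\Delta(z)=O_{\tau}\bigl(\eta^{-3}(p^{-1}+\sqrt{p/n})\bigr)$ for the error in the self-consistent equation --- is exactly what you set aside as ``the technical heart'' without proving it. That estimate is the only place where the hypothesis $\db\ge 5$ enters (through the vanishing of low-weight character sums as in Lemma \ref{cor}, fed into the Schur-complement expansion with the leave-one-row-out resolvent and the variance estimates for $\Zl$ and $\Yl$), and without an explicit $n,p,\eta$-dependence of the error your plan cannot produce any rate at all, let alone the claimed $n^{-\beta}$.

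Relatedly, the bookkeeping that actually yields $\beta=\min\{\gamma_1/4,(1-\gamma_2)/8\}$ is not carried out, and your heuristic for the exponents does not match what is needed: no square-root edge-stability bound is used (or required) here. The stability step is simply $s_n(z)=s_\SC(z-\Delta)$ together with $|\d s_\SC/\d z|\le\eta^{-1}$, giving $|s_n(z)-s_\SC(z)|\le C\eta^{-1}|\Delta(z)|=O(n^{-4\beta}\eta^{-4})$ on $\stt$ (Lemma \ref{EStieltjes}). The branch $(1-\gamma_2)/8$ comes from the term $\sqrt{p/n}\le C n^{-(1-\gamma_2)/2}$ in $\Delta$, which must be $\le n^{-4\beta}$; the branch $\gamma_1/4$ comes from the McDiarmid concentration of Lemma \ref{deviation}, whose exceptional probability $\exp(-p\eta^2\ep^2/8)$ is only small when, with $\eta\ge n^{-\beta+\tau}$ and $\ep\sim n^{-\beta+\tau/2}$, one has $\gamma_1-4\beta\ge 0$; and the conversion step at scale $\weta\sim n^{-\beta}$ turns $n^{-4\beta}\eta^{-4}$ into $n^{-4\beta}\weta^{-3}\sim n^{-\beta}$, which is what makes the choice $\eta\sim n^{-\beta}$ consistent. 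Choosing ``$\eta_n=n^{-\theta}$ for a carefully chosen $\theta$'' and ``optimizing'' is precisely the computation you leave undone, so the specific value of $\beta$ in (\ref{beta}) is asserted rather than derived.
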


We remark that the symbol ``$\prec$'' in (\ref{SD}) is a standard ``stochastic domination'' notation in probability theory (see \cite{Local law} for details), which means that for any $\ep >0$ and any $D>0$, there is a quantity $N(\ep,D,c,\gamma_1,\gamma_2)$, such that whenever $n \geq N(\ep,D,c,\gamma_1,\gamma_2)$, we have
\begin{equation}\label{SD2}
\sup_\I \P\left[|\mu_{n}(\I)-\rsc(\I)| > n^{-\beta+\ep} \right] \leq n^{-D}.
\end{equation}
Here $\P$ is the probability within the space of picking $p$ distinct codewords from $\C$ and the supremum is taken over all intervals $\I \subset \mathbb{R}$. Since $\ep, D$ and $N(\ep,D,c,\gamma_1,\gamma_2)$ do not depend on $\C$, the supremum can be taken over all linear codes $\C$ of length $n$ over $\F_q$ with $\db \geq 5$.

We also remark that $\db \geq 5$ is a very mild restriction on linear codes $\C$, and there is an abundance of binary codes that satisfy this condition, for example, the Gold codes (\cite{Gold}), some families of BCH codes (see \cite{DIN1,DIN2}) and many families of cyclic and linear codes studied in the literature (see for example \cite{CHE,TAN}). Such binary linear codes can also be generated by almost perfect nonlinear (APN) functions \cite{Blon,POTT}, a special class of functions with important applications in cryptography. 

\subsection{Simulations}

We illustrate Theorems \ref{thm} and \ref{thm1-2} by numerical experiments. We focus on binary Gold codes augmented by the all-1 vector. It is known that binary Gold codes have length $n=2^m-1$, dimension $2m$ and dual distance 5. The augmented binary Gold codes has length $n$, dimension $2m+1$ and dual distance at least 5. Because of the presence of the all-1 vector, the condition (\ref{1:srip}) is not satisfied. For each triple $(m,n,p)$ in the set $\{(5,31,8), (7,127,20),(9,511,35),(11,2047,50)\}$, we randomly pick $p$ codewords from the augmented binary Gold code of length $n=2^m-1$ and form the corresponding matrix, from which we use {\bf Sage} to compute the eigenvalues and plot the empirical spectral distribution along with Wigner's distribution (see Figures \ref{fig1} to \ref{fig4} below). We do the above 10 times for each such triple $(m,n,p)$ and at each time, we find that the plots are almost the same as before: they are all very close to Wigner's semicircle law and as the length $n$ increases, they become less and less distinguishable.

In order to illustrate more clearly the shape of the eigenvalue distribution, we also plot a density graph, which is shown in Figure \ref{fig5}. This is based on picking $p=100$ codewords from a binary Gold code of length $n=32767=2^{15}-1$.

From (\ref{beta}) it is easy to see that $\beta \leq 1/12$ and the upper bound is achieved when $\gamma_1=\gamma_2=1/3$. It might be possible to improve this value $\beta$ and hence obtain a better convergence rate. From the simulation results, however, it is not clear to us what the optimal $\beta$ that one may expect is.

\begin{figure}[ht]
\begin{center}
\includegraphics[angle=0,width=1.0 \textwidth,height=0.2 \textheight]{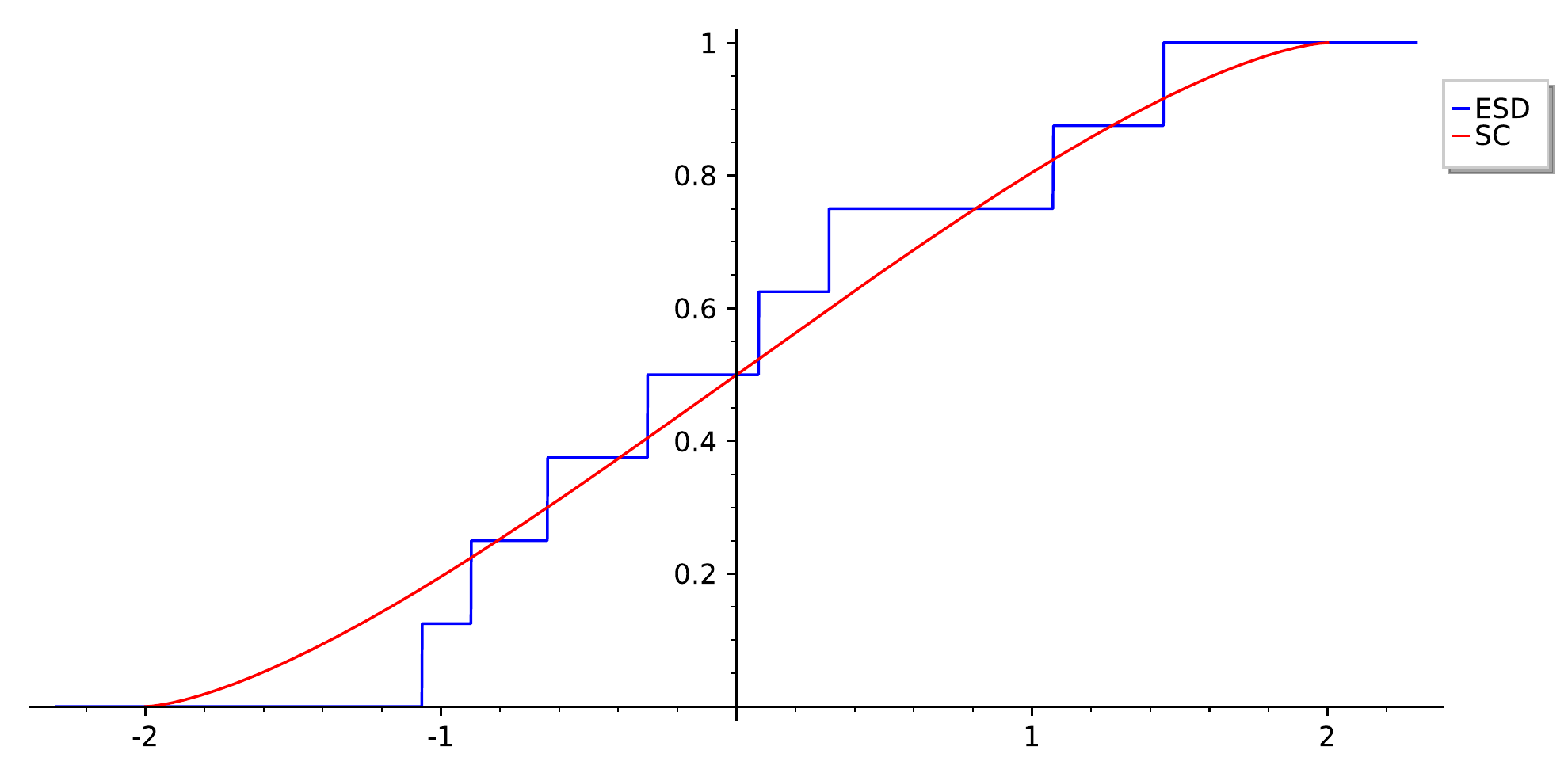}
\caption{Empirical spectral distribution (ESD) of $[31,11,12]$ augmented binary Gold code versus Wigner semicircle law (SC), with $p=8$}\label{fig1}
\end{center}
\end{figure}
\begin{figure}[ht]
\begin{center}
\includegraphics[angle=0,width=1.0 \textwidth,height=0.2 \textheight]{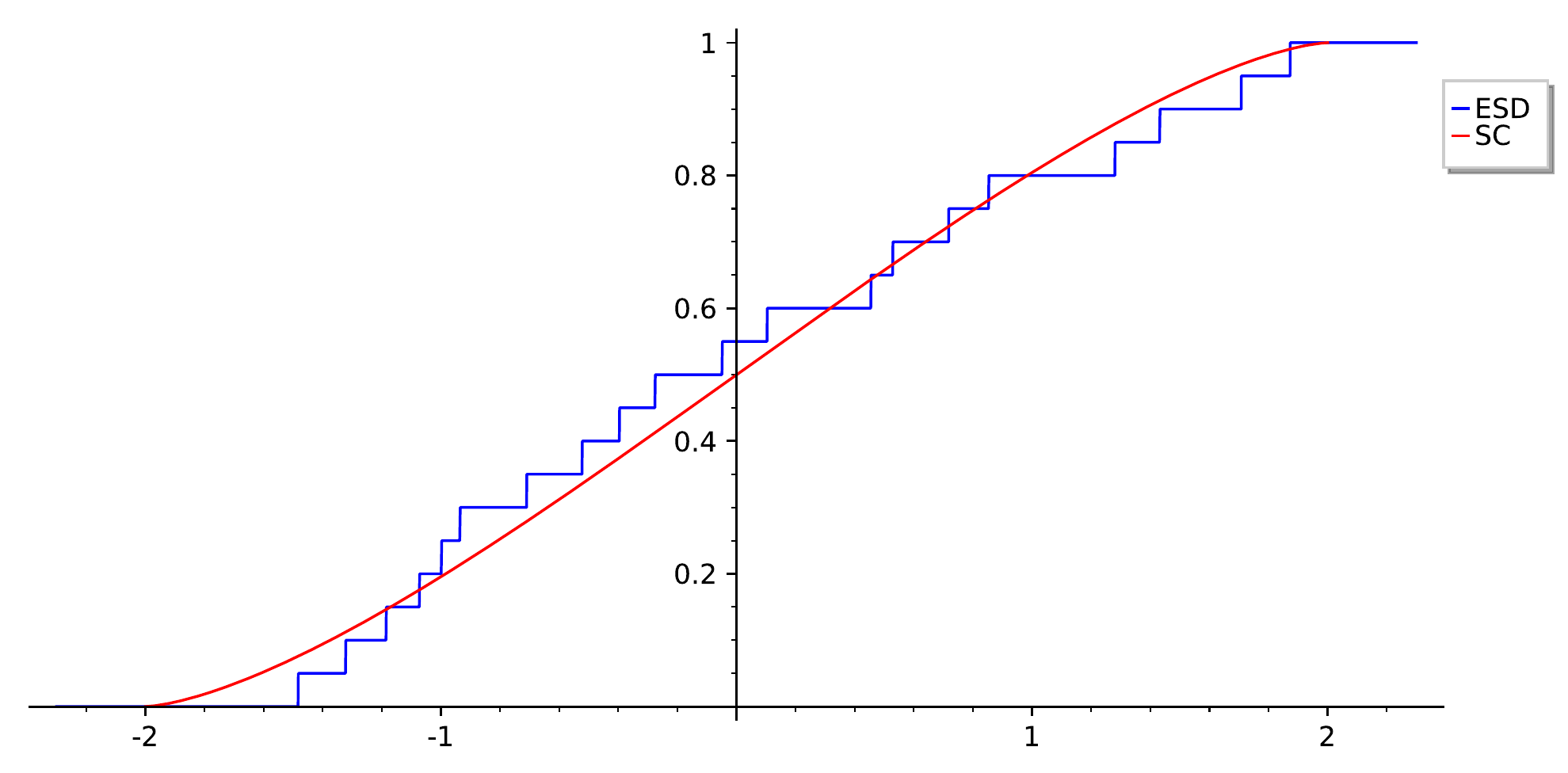}
\caption{Empirical spectral distribution (ESD) of $[127,15,56]$ augmented binary Gold code versus Wigner semicircle law (SC), with $p=20$}\label{fig2}
\end{center}
\end{figure}
\begin{figure}[ht]
\begin{center}
\includegraphics[angle=0,width=1.0 \textwidth,height=0.2 \textheight]{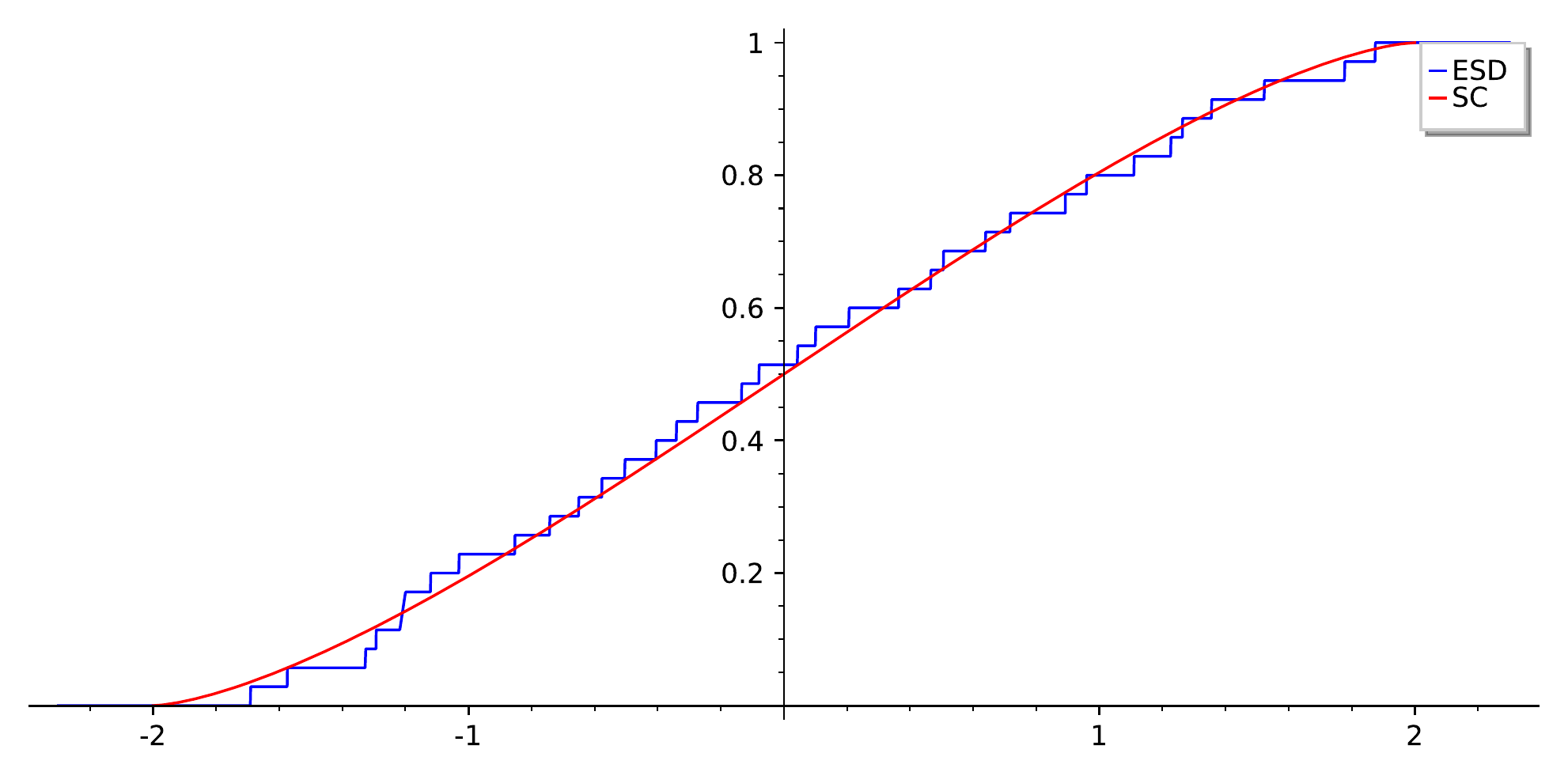}
\caption{Empirical spectral distribution (ESD) of $[511,19,240]$ augmented binary Gold code versus Wigner semicircle law (SC), with $p=35$}\label{fig3}
\end{center}
\end{figure}
\begin{figure}[ht]
\begin{center}
\includegraphics[angle=0,width=1.0 \textwidth,height=0.2 \textheight]{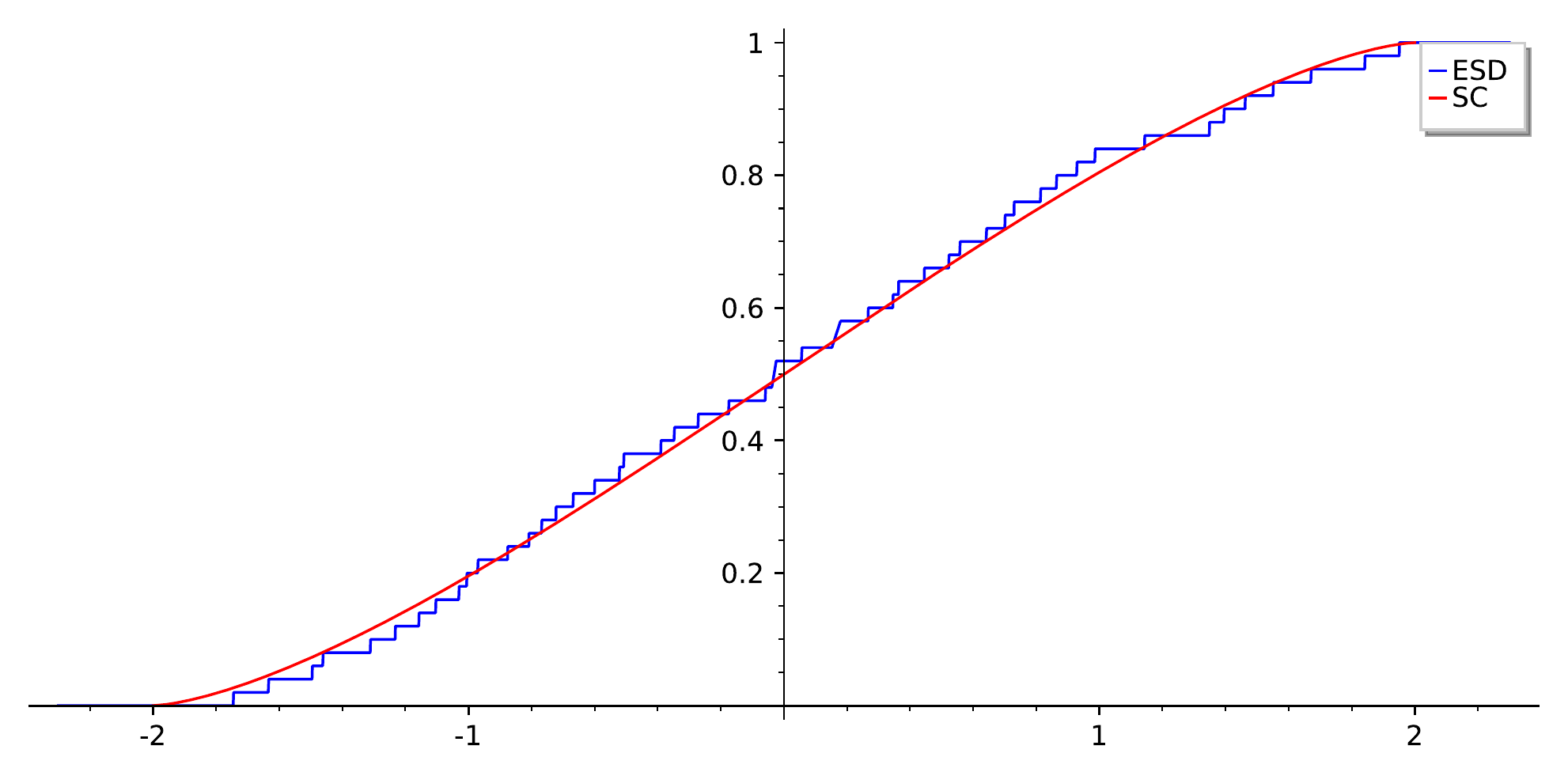}
\caption{Empirical spectral distribution (ESD) of $[2047,23,992]$ augmented binary Gold code versus Wigner semicircle law (SC), with $p=50$}\label{fig4}
\end{center}
\end{figure}
\begin{figure}[ht]
\begin{center}
\includegraphics[angle=0,width=1.0 \textwidth,height=0.2 \textheight]{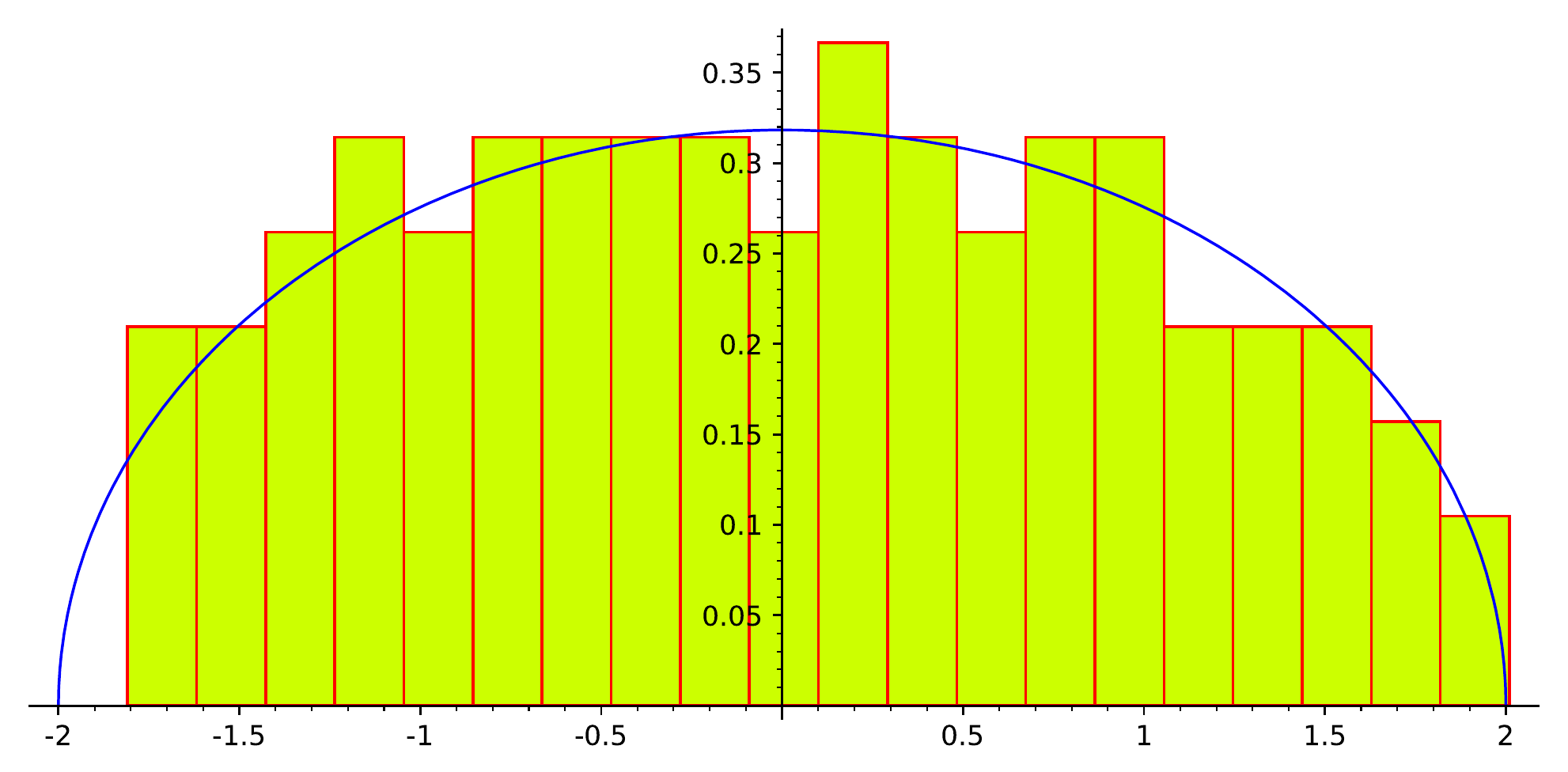}
\caption{Empirical spectral density of $[32767,30,16256]$ binary Gold code versus Wigner semicircle density, with $p=100$}\label{fig5}
\end{center}
\end{figure}

\subsection{Techniques and relation to previous work}

This paper strengthens \cite[Theorem 2]{CSC} on two fronts: in Theorem \ref{thm} we obtain the same convergence by removing the extra condition (\ref{1:srip}), and in Theorem \ref{thm1-2} we obtain a strong and explicit convergence rate with respect to the length of the code, and the results were supported by computer simulations. 

The main technique we use in this paper is the Stieltjes transform, a well-developed and standard tool in random matrix theory, and the method is essentially complex analysis. From the view point of random matrix theory, in \cite{BaiY,Bao,Xie} the authors have used Stieltjes transform to study similar matrix models with success, however, our matrices, arising from general linear codes over finite fields with dual distance 5, possess characteristics significantly different from \cite{BaiY,Bao,Xie}. With applications in mind, say, to generate pseudo-random matrices efficiently via linear codes, our matrices are more natural and interesting. None of the methods in previous works seem to apply directly to our setting. Instead we adopt methods from \cite{ConI,SA,Local law} and use a combination of ideas to obtain our final results.

Related to this paper, the authors in \cite{CTX} have used Stieltjes transform to obtain a strong convergence rate which is similar in nature to Theorem \ref{thm1-2} of this paper, hence extending the work \cite{OQBT}, and some of the arguments are similar.



The paper is organized as follows. In Section \ref{pre} we introduce Stieltjes transform and related formulas and lemmas which will play important roles later. The main ideas of proving Theorems \ref{thm} and \ref{thm1-2} share some similarity but technically speaking, they are quite involved, with the latter being even more so. To streamline the idea of the proofs, we assume a major technical statement (Theorem \ref{thm2}) from which we prove Theorems \ref{thm} and \ref{thm1-2} in Sections \ref{proof} and \ref{proof1-2} respectively. Finally we prove the required Theorem \ref{thm2} in Section \ref{proof2}. 


\section{Preliminaries}\label{pre}

\subsection{Linear codes over $\Fq$ of dual distance at least 5}

The standard additive character $\psi: \Fq \to \bC^*$ is given by
\begin{equation}\label{char}
\psi(a) = \zeta^{\tr(a)}, \quad \forall a \in \Fq,
\end{equation}
where $\tr$ is the absolute trace mapping from $\Fq$ to its prime subfield $\F_r$ of order $r$ and $\zeta=\exp(2\pi\sqrt{-1}/r)$ is a (complex) primitive $r$-th root of unity. In particular when $q=r=2$, then $\zeta=-1$ and $\psi(a)=(-1)^a$ for $a \in \mathbb{F}_2$. It is known that $\psi$ satisfies the following orthogonality relation:
\begin{equation} \label{2:orth} \frac{1}{q}\sum_{x \in \Fq} \psi(ax)= \left\{
\begin{array}{lll}
1 &:& \mbox{ if } a = 0;\\
0 &:& \mbox{ if } a \in \Fq \setminus \{0\}.
\end{array}\right.
\end{equation}

Let $\C$ be an $[n,k]_q$ linear code with dual distance $\db \ge 5$. By the sphere-packing bound \cite[Theorem 1.12.1]{FECC}, we have
$$\#\Cb =q^{n-k} \leq \frac{q^n}{1+n(q-1)+\binom{n}{2}(q-1)^2} =O\left(\frac{q^n}{n^2}\right),$$
here the implied constant in the big O-notation depends only on $q$. From this we can obtain
\begin{eqnarray} \label{le:db}
\frac{n^2}{q^k}=O(1).
\end{eqnarray}
Since $\C$ is linear, the orthogonal relation (\ref{2:orth}) further implies that for any $\mathbf{a} \in \F_q^n$, we have
\begin{equation}\label{lem}
\frac{1}{\#\C}\sum_{\mathbf{c} \in \C}\psi(\mathbf{a}\cdot \mathbf{c})=\left\{\begin{array}{lll}
1 &:& \mbox{ if } \mathbf{a} \in \Cb,\\
0 &:& \mbox{ if } \mathbf{a} \notin \Cb.
\end{array}\right.
\end{equation}
Here $\mathbf{a}\cdot\mathbf{c}$ is the usual inner product between the vectors $\mathbf{a}$ and $\mathbf{c}$ in $\F_q^n$.

\subsection{Stieltjes Transform}
In this section we recall some basic knowledge of Stieltjes transform. Interested readers may refer to \cite[Chapter B.2]{SA} for more details. Stieltjes transform can be defined for any real function of bounded variation. For the case of interest to us, however, we confine ourselves to functions arising from probability theory.

Let $\mu$ be a probability measure and let $F$ be the corresponding cumulative distribution function. The Stieltjes transform of $F$ or $\mu$ is defined by
$$s(z):=\int_{-\infty}^\infty \frac{\d F(x)}{x-z}=\int_{-\infty}^\infty \frac{\mu(\d x)}{x-z}, $$
where $z$ is a complex variable taking values in $\mathbb{C}^+:=\{z \in \mathbb{C}: \Im z > 0\}$, the upper half complex plane. Here $\Im z$ is the imaginary part of $z$.

It is known that $s(z)$ is well-defined for all $z \in \mathbb{C}^+$ and is well-behaved, satisfying the following properties:
\begin{itemize}
\item[(i).] $s(z) \in \mathbb{C}^+$ for any $z \in \mathbb{C}^+$;

\item[(ii).] $s(z)$ is analytic in $\mathbb{C}^+$ and
\begin{eqnarray} \label{2:lips} \left|\frac{\d s(z)}{\d z}\right| \leq \int_{-\infty}^\infty \frac{\mu(\d x)}{|x-z|^2} \le \frac{1}{\eta^2},\end{eqnarray}
where $\eta=\Im z>0$;

\item[(iii).] the probability measure $\mu$ can be recovered from the Stieltjes transform $s(z)$ via the inverse formula (see \cite{SA}):
\begin{equation}\label{2:inverse}
\mu((x_1,x_2])=F(x_2)-F(x_1)=\lim_{\eta \to 0^+}\frac{1}{\pi}\int_{x_1}^{x_2} \Im(s(E+\i\eta)) \, \d E;
\end{equation}

\item[(iv).] the convergence of Stieltjes transforms is equivalent to the convergence of the underlying probability measures (see for example \cite[Theorem B.9]{SA}).
\end{itemize}

\subsection{Resolvent Identities and Formulas for Green function entries}

Let $M$ be a Hermitian $p \times p$ matrix whose $(j,k)$-th entry is $M_{jk}$. Denote by $G$ the Green function of $M$, that is,
\[G:=G(z)=(M-zI_p)^{-1},\]
where $z \in \mathbb{C}^{+}$. The $(j,k)$-th entry of $G$ is $G_{jk}$.

Given any subset $T \subset [1\isep p]:=\{1,2,\cdots,p\}$, let $M^{(T)}$ be the $p \times p$ matrix whose $(j,k)$-th entry is given by $(M^{(T)})_{jk}:=\mathbbm{1}_{j,k \notin T}M_{jk}$. In addition, let $G^{(T)}$ be the Green function of $M^{(T)}$, that is,
\[ G^{(T)}:=G^{(T)}(z)=(M^{(T)}-zI_p)^{-1}.\]

When $T$ is a singleton, say $\{\l\}$, it is common to further abbreviate the notation $G^{(\{\l\})}$ as $G^{(\l)}$, and similar for other matrices.

Let $\ml$ denote the $\l$-th column of $M$. For $z \in \mathbb{C}^{+}$ and any $\l \in [1\isep p] \setminus T$, we have the \emph{Schur complement formula} (see \cite{SA, Local law})
\begin{equation}\label{diagonal}
\frac{1}{G_{\l\l}^{(T)}}=M_{\l\l}-z-\ml^*G^{(T\l)}\ml,
\end{equation}
where $G^{(T\l)}:=G^{(T \cup \{\l\})}$ and $\ml^*$ is the conjugate transpose of $\ml$.


We also have the following eigenvalue interlacing property (see \cite{SA, Local law})
\begin{equation}\label{Interlacing} |\Tr G^{(T)}(z)-\Tr G(z)| \leq C\eta^{-1},
\end{equation}
where $z=E+\i\eta \in \mathbb{C}^{+}$, $\Tr$ is the trace function, and $C$ is a constant depending only on the set $T$.
\subsection{Stieltjes Transform of the Semicircle Law}
The Stieltjes transform $s_\SC$ of the semicircle distribution given in (\ref{scpdf}) can be computed as (see \cite{SA})
\begin{equation}\label{SSC}
s_\SC(z)=\frac{-z+\sqrt{z^2-4}}{2}.
\end{equation}
Here and throughout this paper, we always pick the complex square root $\sqrt{\cdot}$ to be the one with positive imaginary part.

It is well-known that $s_\SC(z)$ is the unique function that satisfies the equation
\begin{equation}\label{SSC2}
u(z)=\frac{1}{-z-u(z)}
\end{equation}
such that $\Im u(z) > 0$ whenever $\eta:=\Im z > 0$.

\subsection{Convergence of Stieltjes Transform in Probability}
In order to bound the convergence rate of a random Stieltjes transform in probability, we need the following well-known McDiarmid's lemma from probability theory (see \cite[Lemma F.3]{Local law}).

\begin{lemma}[McDiarmid]\label{leMic}
Let $X_1,\cdots,X_p$ be independent random variables taking values in the spaces $E_1,\cdots,E_p$ respectively. Let
$$f: E_1 \times \cdots E_p \to \mathbb{R}$$
be a measurable function and define the random variable $Y=f(X_1,\cdots,X_p)$. Define, for each $k \in [1\isep p]$,
\begin{equation}\label{ck}
c_k:=\sup |f(x_1,\cdots,x_{k-1},y,x_{k+1},\cdots,x_p)-f(x_1,\cdots,x_{k-1},z,x_{k+1},\cdots,x_p)|,
\end{equation}
where the supremum is taken over all $x_j \in E_j$ for $j \neq k$ and $y,z \in E_k$. Then for any $\ep > 0$, we have
\begin{equation}\label{Mic}
\P\left(|Y-\E Y| \geq \ep \right) \leq 2\exp\left(-\frac{2\ep^2}{c_1^2+\cdots+c_p^2}\right).
\end{equation}
\end{lemma}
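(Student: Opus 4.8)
The plan is to deduce McDiarmid's inequality from the Azuma--Hoeffding martingale concentration bound applied to the Doob martingale of $Y$. First I would dispose of the trivial case: if some $c_k=\infty$ the right-hand side of (\ref{Mic}) is at least $2$ and there is nothing to prove, so I may assume every $c_k$ is finite. Telescoping the bounded-difference hypothesis one coordinate at a time then shows that $f$ differs from its value at any fixed reference point by at most $c_1+\cdots+c_p$, hence $f$ is bounded and $Y$ is integrable. Set $\mathcal{F}_k:=\sigma(X_1,\dots,X_k)$ for $k=0,1,\dots,p$ and $Y_k:=\E[Y\mid\mathcal{F}_k]$, so that $Y_0=\E Y$, $Y_p=Y$, and $(Y_k)$ is a martingale. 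Writing $D_k:=Y_k-Y_{k-1}$, one has $\E[D_k\mid\mathcal{F}_{k-1}]=0$ and $Y-\E Y=\sum_{k=1}^p D_k$.

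Next I would bound the increments $D_k$ using independence. Since the $X_j$ are independent, $Y_k=g_k(X_1,\dots,X_k)$ where $g_k(x_1,\dots,x_k):=\E[f(x_1,\dots,x_k,X_{k+1},\dots,X_p)]$, and $Y_{k-1}=\int g_k(X_1,\dots,X_{k-1},t)\,\mu_k(\d t)$ with $\mu_k$ the law of $X_k$. Thus for each fixed realization $(x_1,\dots,x_{k-1})$ of $\mathcal{F}_{k-1}$, the conditional law of $D_k$ is that of the centered random variable $g_k(x_1,\dots,x_{k-1},X_k)-\E[g_k(x_1,\dots,x_{k-1},X_k)]$, whose range lies in an interval of length at most
\[
\sup_{y,z}\bigl|g_k(x_1,\dots,x_{k-1},y)-g_k(x_1,\dots,x_{k-1},z)\bigr|\ \le\ c_k,
\]
the last inequality following by pulling the expectation over $(X_{k+1},\dots,X_p)$ outside the absolute value and invoking (\ref{ck}). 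Applying the (unconditional) Hoeffding lemma to this variable gives, for every $\lambda\in\mathbb{R}$ and almost every realization of $\mathcal{F}_{k-1}$,
\[
\E\!\left[e^{\lambda D_k}\,\middle|\,\mathcal{F}_{k-1}\right]\ \le\ \exp\!\left(\frac{\lambda^2 c_k^2}{8}\right);
\]
arguing realization-by-realization this way avoids any measurability concern about suprema and infima over the abstract spaces $E_j$. The conditional Hoeffding lemma $\E[e^{\lambda D}]\le e^{\lambda^2(b-a)^2/8}$ for a centered $D\in[a,b]$ can itself be cited or recalled in a line from convexity of $t\mapsto e^{\lambda t}$ and a second-order estimate of the cumulant generating function.

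Then I would chain these conditional bounds: for $\lambda>0$,
\[
\E\!\left[e^{\lambda(Y-\E Y)}\right]=\E\!\left[e^{\lambda\sum_{k=1}^{p-1}D_k}\,\E\!\left[e^{\lambda D_p}\mid\mathcal{F}_{p-1}\right]\right]\le e^{\lambda^2 c_p^2/8}\,\E\!\left[e^{\lambda\sum_{k=1}^{p-1}D_k}\right],
\]
and iterating down to $k=1$ yields $\E[e^{\lambda(Y-\E Y)}]\le\exp\!\bigl(\tfrac{\lambda^2}{8}\sum_{k=1}^p c_k^2\bigr)$. A Chernoff bound gives $\P(Y-\E Y\ge\ep)\le\exp\!\bigl(-\lambda\ep+\tfrac{\lambda^2}{8}\sum_k c_k^2\bigr)$, and optimizing at $\lambda=4\ep/\sum_k c_k^2$ produces $\P(Y-\E Y\ge\ep)\le\exp\!\bigl(-2\ep^2/\sum_k c_k^2\bigr)$. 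Since $-f$ satisfies the same bounded-difference hypothesis with the same constants $c_k$, the same estimate holds for $\P(\E Y-Y\ge\ep)$, and a union bound yields (\ref{Mic}).

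I expect the only genuinely delicate point to be the passage from (\ref{ck}) to the increment bound on $D_k$: one must integrate out the ``future'' coordinates $X_{k+1},\dots,X_p$ correctly and establish the conditional moment generating function bound pointwise in the conditioning variables, precisely so as not to require measurability of suprema over the unstructured spaces $E_j$. Beyond that, the argument is the standard Azuma--Hoeffding machinery together with the scalar Chernoff optimization.
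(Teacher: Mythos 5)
Your proof is correct: the Doob-martingale decomposition, the bound on each increment by integrating out the future coordinates so that its conditional range has length at most $c_k$, the conditional Hoeffding lemma, the Chernoff optimization at $\lambda=4\ep/\sum_k c_k^2$, and the two-sided bound via $-f$ all check out, and your realization-by-realization treatment of the conditioning correctly avoids measurability issues with suprema over the abstract spaces $E_j$. The paper does not prove this lemma at all --- it cites it as Lemma F.3 of the lecture notes of Benaych-Georges and Knowles --- and your argument is exactly the standard bounded-differences (Azuma--Hoeffding) proof found there, so it is a valid self-contained substitute for the citation.
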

We will need the following concentration inequality. We remark that a very similar concentration inequality was proved (see \cite[Lemma F.4]{Local law}). Here for the sake of completeness, we provide a detailed proof.

\begin{lemma}\label{deviation}
Let $\M$ be a $p \times n$ random matrix with independent rows, define $S=(n/p)^{1/2}(\M\M^*-I_p)$. Let $s(z)$ be the Stieltjes transform of the empirical spectral distribution of $S$. Then for any $\ep > 0$ and $z =E+i \eta \in \mathbb{C}^+$,
$$\P\left(|s(z)-\E s(z)| \geq \ep \right) \leq 2\exp\left(-\frac{p\eta^2\ep^2}{8}\right).$$
\end{lemma}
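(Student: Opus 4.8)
The plan is to apply McDiarmid's inequality (Lemma~\ref{leMic}) with the $X_k$ being the independent rows of $\M$ and with $f$ the (real or imaginary part of the) Stieltjes transform $s(z)$ viewed as a function of these rows. The only thing to check is the bounded-differences constants $c_k$: I need to show that changing a single row of $\M$ perturbs $s(z)$ by at most $O\big(1/(p\eta)\big)$, so that $c_1^2+\cdots+c_p^2 = O\big(1/(p\eta^2)\big)$ and (\ref{Mic}) gives exactly the stated exponent $p\eta^2\ep^2/8$ up to the bookkeeping constant.

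First I would set up the perturbation. Write $\M$ and $\widetilde{\M}$ for two $p\times n$ matrices differing only in row $\l$, and let $S,\widetilde S$ and $G,\widetilde G$ be the corresponding matrices $(n/p)^{1/2}(\M\M^*-I_p)$ and their Green functions at $z$. Then $s(z)=\frac1p\Tr G(z)$ and $\widetilde s(z)=\frac1p\Tr\widetilde G(z)$. The key observation is that $S$ and $\widetilde S$ differ only in the $\l$-th row and $\l$-th column; equivalently $S^{(\l)}=\widetilde S^{(\l)}$ in the notation of Section~\ref{pre}. Hence by the interlacing bound (\ref{Interlacing}), $|\Tr G(z)-\Tr G^{(\l)}(z)|\le C\eta^{-1}$ and likewise $|\Tr\widetilde G(z)-\Tr\widetilde G^{(\l)}(z)|=|\Tr\widetilde G(z)-\Tr G^{(\l)}(z)|\le C\eta^{-1}$ with the \emph{same} constant $C$ (which for a rank-type perturbation along one coordinate can be taken to be an absolute constant, in fact $C=1$ suffices since removing one row and column changes the eigenvalue count by $1$ and each summand $1/(\lambda_j-z)$ has modulus $\le 1/\eta$). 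Combining the two bounds by the triangle inequality gives $|\Tr G(z)-\Tr\widetilde G(z)|\le 2C\eta^{-1}$, so $|s(z)-\widetilde s(z)|\le 2C/(p\eta)$, and therefore $c_\l\le 2C/(p\eta)$ for every $\l$.

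Then I would feed this into McDiarmid: apply Lemma~\ref{leMic} separately to $Y=\Re s(z)$ and $Y=\Im s(z)$, each of which is a measurable real function of the independent rows $X_1,\dots,X_p$ with bounded differences $c_k\le 2C/(p\eta)$, hence $c_1^2+\cdots+c_p^2\le 4C^2/(p\eta^2)$. This yields $\P(|\Re s(z)-\E\Re s(z)|\ge\ep/\sqrt2)\le 2\exp(-p\eta^2\ep^2/(4C^2\cdot 2))$ and similarly for the imaginary part; a union bound over the two parts, together with $|s-\E s|\le|\Re(s-\E s)|+|\Im(s-\E s)|$ and a choice of constants, gives the claimed $\P(|s(z)-\E s(z)|\ge\ep)\le 2\exp(-p\eta^2\ep^2/8)$ (the constant $8$ absorbing the factor-of-two splits and the value of $C$).

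The main obstacle — really the only non-routine point — is pinning down that the interlacing constant in (\ref{Interlacing}) for a \emph{single-coordinate} deletion $T=\{\l\}$ is an absolute constant (and small enough), so that the two applications of (\ref{Interlacing}) to $G$ and $\widetilde G$ combine to give $c_\l=O(1/(p\eta))$ with the right numerical constant; everything else is a direct substitution into McDiarmid. One should also note in passing that the hypothesis "$\M$ has independent rows" is exactly what licenses taking $X_1,\dots,X_p$ to be those rows in Lemma~\ref{leMic}, and that the argument makes no use of any algebraic structure of the code — it is a purely probabilistic concentration statement around the (code-dependent) expectation $\E s(z)$.
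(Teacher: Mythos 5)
Your overall strategy is the same as the paper's: apply McDiarmid's inequality (Lemma \ref{leMic}) with the independent rows of $\M$ as the variables $X_1,\dots,X_p$, and reduce everything to a bounded-difference bound $c_\l=O\bigl(1/(p\eta)\bigr)$ for a single-row replacement. The divergence is in how you obtain $c_\l$, and that is exactly where the argument does not close. Your justification that the interlacing constant in (\ref{Interlacing}) for $T=\{\l\}$ ``can be taken to be $1$, since removing one row and column changes the eigenvalue count by $1$ and each summand $1/(\lambda_j-z)$ has modulus $\le 1/\eta$'' is not valid: zeroing the $\l$-th row and column moves \emph{all} the eigenvalues (they merely interlace), so you cannot cancel $p-1$ of the summands against each other. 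The correct interlacing argument bounds the difference of eigenvalue counting functions by $1$ and integrates it against $|x-z|^{-2}$, giving a constant of order $\pi$, and under the paper's convention (the row/column is zeroed, not deleted, so $G^{(\l)}$ carries an extra eigenvalue at $0$) there is a further term of size $1/|z|\le 1/\eta$. Thus $C$ is an absolute constant but certainly not $1$. Feeding $c_\l\le 2C/(p\eta)$, i.e. $c_1^2+\cdots+c_p^2\le 4C^2/(p\eta^2)$, into (\ref{Mic}) applied separately to $\Re s$ and $\Im s$ at threshold $\ep/\sqrt2$ and taking a union bound yields $4\exp\bigl(-p\eta^2\ep^2/(4C^2)\bigr)$, which for $C\ge\pi$ is strictly weaker than the stated $2\exp\bigl(-p\eta^2\ep^2/8\bigr)$; the factor-of-two splits and the value of $C$ are \emph{not} absorbed by the $8$ (even granting $C=1$, the prefactor $4$ and the $\sqrt2$-split do not reproduce the stated inequality). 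So your outline proves a version of the lemma with an unspecified absolute constant in the exponent --- which is in fact all that the later applications in the paper require --- but it does not prove the lemma with its explicit constants, and the one step you flagged as non-routine is argued incorrectly.

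For comparison, the paper gets the constant cleanly without interlacing: each entry of $S$ is a linear function of an inner product of two rows of $\M$, so replacing row $\l$ perturbs $S$ only in its $\l$-th row and column, i.e. by a matrix of rank at most $2$; by the resolvent identity the difference $G-\widetilde G=G(\widetilde S-S)\widetilde G$ therefore has rank at most $2$, while trivially its operator norm is at most $\|G\|+\|\widetilde G\|\le 2\eta^{-1}$, whence $|\Tr G-\Tr\widetilde G|\le 4\eta^{-1}$ and $c_\l\le 4/(p\eta)$ in (\ref{ck}); inserted into (\ref{Mic}) (with $s$ treated directly as the function $f$) this gives exactly $2\exp\bigl(-p\eta^2\ep^2/8\bigr)$. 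If you wish to keep the interlacing route, either state the conclusion with an absolute constant in place of $8$, or switch to the rank-two resolvent bound to recover the stated constants.
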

\begin{proof}[Proof of Lemma \ref{deviation}]
Applying Lemma \ref{leMic}, we take $X_j$ to be the $j$-th row of $\M$ and the function $f$ to be the Stieltjes transform $s$. Note that the $(j,k)$-th entry of $S$ is a linear function of the inner product of the $j$-th and $k$-th rows of $\M$. Hence changing one row of $\M$ only gives an additive perturbation of $S$ of rank at most two. Applying the resolvent identity \cite[(2.3)]{Local law}, we see that the Green function is also only affected by an additive perturbation by a matrix of rank at most two and operator norm at most $2\eta^{-1}$. Therefore the quantities $c_k$ in (\ref{ck}) can be bounded by
$$c_k \leq \frac{4}{p\eta}.$$
Then the required result follows directly from inserting the above bound to (\ref{Mic}).
\end{proof}

\section{Proof of Theorem \ref{thm}}\label{proof}

Throughout the paper, let $\C$ be an $[n,k]_q$ linear code over $\Fq$. We always assume that its dual distance satisfies $\db \ge 5$. Denote $N=q^k$. The standard additive character on $\Fq$ extends component-wise to a natural mapping $\psi: \Fq^{n} \to \bC^{n}$. Define $\D=\psi(\C)$.

\subsection{Problem set-up}

Theorems \ref{thm} and \ref{thm1-2} are for random matrices in the probability space $\OpI$ of choosing $p$ distinct elements uniformly from $\D$. Denote by $\D^p$ the probability space of choosing $p$ elements from $\D$ independently and uniformly. Because $d^\bot \ge 5$, from (\ref{le:db}) we have
$$\frac{\#\D^p}{\#\OpI}=\frac{N^p}{N(N-1)(N-2)\cdots(N-p+1)}=1+O\left(\frac{p^2}{N}\right) \to 1,$$
as $n,p \to \infty$. Thus to prove Theorems \ref{thm} and \ref{thm1-2}, it is equivalent to consider the larger probability space $\D^p$. This will simplify the proofs.

Now let $\Phi_n$ be a $p \times n$ random matrix whose rows are picked from $\D$ uniformly and independently. Denote by $\E$ the expectation with respect to the probability space $\D^p$. We may assume that $p:=p(n)$ is a function of $n$ such that $p,n/p \to \infty$ as $n \to \infty$.

Let
\begin{eqnarray} \label{3:gn} \G=\frac{1}{n}\Phi_n\Phi_n^*, \quad M_n=\sqrt{\frac{n}{p}}(\G-I_p).\end{eqnarray}
Let $\mu_n$ be the empirical spectral measure of $M_n$ and let $s_{M_n}(z)$ be its Stieltjes transform, that is,
$$s_{M_n}(z)=\frac{1}{p}\sum_{j=1}^p \frac{1}{\lambda_j-z}=\frac{1}{p}\Tr G. $$
Here $\lambda_1,\cdots,\lambda_p$ are the eigenvalues of the matrix $M_n$, and $G:=G(z)$ is the Green function of $M_n$ given by
\[ G(z)=(M_n-zI_p)^{-1}.\]
Note that the Stieltjes transform $s_{M_n}(z)$ is itself a random variable in the space $\D^p$. We define
\begin{equation}\label{EStietjes}
s_n(z):=\E s_{M_n}(z)=\frac{1}{p}\E \Tr G.
\end{equation}
Throughout the paper, the complex value $z \in \mathbb{C}^+$ is always written as
\[z=E+ \i \eta, \quad \mbox{ where } E, \eta \in \mathbb{R} \mbox{ and } \eta >0. \]
For a fixed constant $\tau \in (0,1)$, we define
\begin{equation}\label{st}
\st:=\bigg\{z=E+\i\eta: |E| \leq \tau^{-1}, 0< \eta \leq \tau^{-1}
\bigg\}.
\end{equation}
Now we assume a result about the expected Stieltjes transform $s_n(z)$.
\begin{theorem}\label{thm2}
For any $z \in \st$, we write
\begin{eqnarray} \label{3:snz} s_n(z)=\frac{1}{-z-s_n(z)+\Delta(z)}. \end{eqnarray}
Then we have
\[ \Delta(z)=O_{\tau}\left(\eta^{-3} \left(p^{-1}+\sqrt{p/n}\right)\right).\]
\end{theorem}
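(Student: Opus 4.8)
The plan is to run the standard resolvent / self-consistent-equation scheme for $s_n(z)$, feeding in the arithmetic of $\C$ through the orthogonality relation (\ref{lem}). Fix $\l\in[1\isep p]$ and write $\E_\l$ for conditional expectation given all rows of $\Phi_n$ except the $\l$-th. Since every row of $\Phi_n$ has entries of modulus $1$ we have $(\G)_{\l\l}=1$, hence $(M_n)_{\l\l}=0$, so the Schur complement formula (\ref{diagonal}) gives $1/G_{\l\l}=-z-Y_\l$ with $Y_\l=\ml^*G^{(\l)}\ml$, where $\ml$ is the $\l$-th column of $M_n$ and $(M_n)_{j\l}=(np)^{-1/2}\langle\phi_j,\phi_\l\rangle$ for $j\ne\l$ ($\phi_j$ the $j$-th row). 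By exchangeability $s_n(z)=\E G_{\l\l}(z)$, so expanding $1=\E[G_{\l\l}(-z-Y_\l)]$ and splitting $Y_\l=\E Y_\l+(Y_\l-\E Y_\l)$ yields the exact identity
\[ \Delta(z)=\frac{s_n^2+zs_n+1}{s_n}= -\big(\E Y_\l-s_n\big)-\frac{1}{s_n}\,\E\!\big[G_{\l\l}(Y_\l-\E Y_\l)\big].\]
Two cheap facts tame the right-hand side on $\st$. First, $\Im G^{(\l)}=\eta\,G^{(\l)}(G^{(\l)})^*$ is positive semidefinite, so $\Im Y_\l\ge 0$, whence $|G_{\l\l}|\le\eta^{-1}$ and $|{-z-\E_\l Y_\l}|\ge\eta$. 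Second, $\Im s_n=\E\Im s_{M_n}>0$ and the a priori bound $\E\tfrac1p\Tr M_n^2=\tfrac{p-1}{p}<1$ (which, via (\ref{lem}), forces most of the mass of $\E\mu_n$ into a fixed compact set) give $|s_n(z)|\ge c_\tau\eta$.

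For the first error term one computes $\E Y_\l$ by conditioning. The identity $\phi_\l(t)\overline{\phi_\l(s)}=\psi\big((e_t-e_s)\cdot\mathbf c\big)$, with $\mathbf c$ the $\l$-th codeword and $e_t,e_s$ standard basis vectors, together with (\ref{lem}) and $\db\ge 5>2$, gives $\E_\l[\phi_\l(t)\overline{\phi_\l(s)}]=\mathbbm 1_{t=s}$; a direct manipulation then yields
\[ \E_\l Y_\l=\tfrac1p\Tr G^{(\l)}+\tfrac{1}{pz}+\tfrac{1}{\sqrt{np}}\big(p+z\Tr G^{(\l)}\big),\]
and since $\tfrac1p\Tr G^{(\l)}=s_{M_n}+O(\tfrac{1}{p\eta})$ by the interlacing bound (\ref{Interlacing}), taking expectations gives $\E Y_\l-s_n=O_\tau\big((p^{-1}+\sqrt{p/n})\eta^{-1}\big)$, which is already within the claimed bound as $\eta^{-1}\le\tau^{-2}\eta^{-3}$ on $\st$. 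For the covariance term I split $Y_\l-\E Y_\l=\xi_\l+\zeta_\l$ with $\xi_\l=Y_\l-\E_\l Y_\l$ and $\zeta_\l=\E_\l Y_\l-\E Y_\l$, both centred, $\zeta_\l$ being measurable with respect to the other rows. Using $G_{\l\l}=\alpha^{-1}+\alpha^{-1}\xi_\l G_{\l\l}$ with $\alpha=-z-\E_\l Y_\l$, the identity $\E_\l[\alpha^{-1}\xi_\l]=0$ kills the leading term and leaves $\E[G_{\l\l}\xi_\l]=\E\!\big[\alpha^{-1}\xi_\l^2 G_{\l\l}\big]$; bounding this needs $\E_\l|\xi_\l|^2$, a second moment of a quadratic form that expands into character sums over $\C$ of order $\le 4$ — this is exactly where the full strength $\db\ge 5$ is used — giving, via the identity $\Tr\big(G^{(\l)}(G^{(\l)})^*\big)=\eta^{-1}\Im\Tr G^{(\l)}$, an estimate of the form $\E_\l|\xi_\l|^2=O\big(p^{-2}\eta^{-1}\Im\Tr G^{(\l)}\big)$. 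The term $\E[G_{\l\l}\zeta_\l]$ is treated with the same resolvent expansion plus a row-by-row martingale decomposition of $\zeta_\l$ and the concentration estimate of Lemma \ref{deviation} applied to $\tfrac1p\Tr G^{(\l)}$.

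Assembling these estimates and multiplying by $|s_n|^{-1}\ll_\tau\eta^{-1}$ gives $|\Delta(z)|\ll_\tau\eta^{-3}(p^{-1}+\sqrt{p/n})$ for $\eta$ not too small, while for $\eta\le\sqrt{p^{-1}+\sqrt{p/n}}$ the trivial bound $|\Delta|=|s_n^{-1}+z+s_n|\ll_\tau\eta^{-1}$ already lies below $\eta^{-3}(p^{-1}+\sqrt{p/n})$, completing the argument. The delicate point — which I expect to be the main obstacle — is the $\eta$-bookkeeping: the imaginary-part factors generated by $\Im Y_\l\ge 0$ and by $\Tr(G^{(\l)}(G^{(\l)})^*)=\eta^{-1}\Im\Tr G^{(\l)}$ must be carried through and paired against $|\alpha|^{-1}\le(\eta+\Im\E_\l Y_\l)^{-1}$ and against the variance bounds so that no term degrades past $\eta^{-3}$; moreover, since $\db\ge 5$ controls only fourth-order character sums, one cannot invoke any moment of $\xi_\l$ beyond the second, so every resolvent expansion must be stopped after the quadratic term, with the tails absorbed into the deterministic bounds $|G_{\l\l}|\le\eta^{-1}$ and $|{-z-Y_\l}|\ge\eta$.
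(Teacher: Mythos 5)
You have reproduced the paper's skeleton correctly up to a point: the Schur complement for $G_{\l\l}$, conditioning on the $\l$-th row so that (\ref{lem}) with $\db\ge5$ kills the non-paired character sums (your $\xi_\l$ is exactly the paper's $Z_\l$ from (\ref{Zl})), the computation of $\E_\l Y_\l$ and the bound $\E Y_\l-s_n=O_\tau(\eta^{-1}(p^{-1}+\sqrt{p/n}))$, and the interlacing/martingale treatment of the trace fluctuation all match Lemmas \ref{cor}--\ref{Yl2}; the lower bound $|s_n|\ge c_\tau\eta$ you sketch is also provable. The gap is in the covariance term, on two counts. First, the stated bound $\E_\l|\xi_\l|^2=O\big(p^{-2}\eta^{-1}\Im\Tr \Gl\big)$ is not justified: expanding as in the paper gives $\E_\l|\xi_\l|^2\le \frac{C}{p^2}\Tr\big(\Gl\Gnl \Gl(\bar z)\Gnl\big)=\frac{C}{p^2}\sum_j\big(\sqrt{p/n}\,\ljl+1\big)^2|\ljl-z|^{-2}$, and discarding the factor $\big(\sqrt{p/n}\,\ljl+1\big)^2$ amounts to assuming $\|\Gnl\|=O(1)$, an operator-norm bound that is neither available deterministically nor proved anywhere; the paper's Lemma \ref{Zl2}(b) avoids it by taking full expectation and using $\E\frac1p\sum_j(\ljl)^2=O(1)$, which yields only $\E|Z_\l|^2=O_\tau(p^{-1}\eta^{-2})$ plus additive terms not of your claimed form. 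Second, and more seriously, the $\eta$-bookkeeping you yourself flag as the main obstacle does not close. With the provable bounds $\E|\xi_\l|^2,\E|\zeta_\l|^2=O_\tau(p^{-1}\eta^{-2})$, every route you indicate for $\frac1{s_n}\E[G_{\l\l}(Y_\l-\E Y_\l)]$ costs three factors of $\eta^{-1}$ (from $1/s_n$, from $\alpha^{-1}$ or the Lipschitz constant of $G_{\l\l}$ in $Y_\l$, and from $G_{\l\l}$ itself) on top of the variance, i.e.\ gives $O_\tau(p^{-1}\eta^{-5})$; for $\zeta_\l$ in particular, which is a function of the \emph{other} rows, the conditional-centering trick does not remove the linear term $\E[\alpha^{-1}\zeta_\l]$, and no $\Im$-type cancellation is available there, so Cauchy--Schwarz against $\mathrm{Var}(G_{\l\l})\le\eta^{-4}\mathrm{Var}(Y_\l)$ is again of order $p^{-1}\eta^{-5}$ after dividing by $s_n$. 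Your two-regime patch does not repair this: the trivial bound $|\Delta|\ll_\tau\eta^{-1}$ covers only $\eta\lesssim(p^{-1}+\sqrt{p/n})^{1/2}$, whereas $p^{-1}\eta^{-5}\le C\eta^{-3}(p^{-1}+\sqrt{p/n})$ needs $\eta^2\gtrsim p^{-1}/(p^{-1}+\sqrt{p/n})$. With $p\asymp n^{1/3}$ this leaves the entire window $n^{-1/6}\ll\eta\ll1$ uncovered: at $\eta=n^{-1/12}$ the claimed bound is $O(n^{-1/12})$ while your estimates give only $O(n^{1/12})$, which does not even show $\Delta\to0$ there.

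The repair is essentially the paper's arrangement of the expansion, which you should adopt instead of your exact identity. The paper absorbs the trace fluctuation into $Y_\l$ itself, writing $1/G_{\l\l}=-z-s_n(z)+Y_\l$ as in (\ref{diagonal2}), and expands $\E G_{\l\l}=\E(\an+Y_\l)^{-1}=\frac1\an-\frac{\E Y_\l}{\an^2}+\frac1{\an^2}\E\frac{Y_\l^2}{\an+Y_\l}$ with $\an=-z-s_n$. The quantity that has to match $\Delta$ is then essentially $\an^2\Al$, so the second-order error is charged only \emph{one} factor $\eta^{-1}$, namely $\big|\frac{1}{\an+Y_\l}\big|=|G_{\l\l}|\le\eta^{-1}$, giving $|\an^2\Al|\le|\E Y_\l|+\eta^{-1}\E|Y_\l|^2=O_\tau\big(\eta^{-3}(p^{-1}+\sqrt{p/n})\big)$, where $\E|Y_\l|^2$ is controlled by exactly your two ingredients (the $d^\bot\ge5$ bound for $Z_\l$ and the row-wise martingale/interlacing bound for $\Tr\Gl$, Lemma \ref{Yl2}(b)). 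In that arrangement no lower bound on $|s_n|$, no conditional refinement of $\E_\l|\xi_\l|^2$, and no separate estimate of the covariance between $G_{\l\l}$ and the trace fluctuation are needed; only the quotient by $1+\an\Al$ remains to be controlled, which is a far milder issue than the $\eta^{-2}$ you lose.
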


We emphasize here that this is one of the major technical results in this paper and the proof is a little complicated. This is the only result in the paper that is directly related to the properties of linear codes. It requires $\db \geq 5$ but not the extra condition (\ref{1:srip}) used in \cite{CSC}. To streamline the presentation, here we assume Theorem \ref{thm2}, then Theorem \ref{thm} can be proved easily. The proof of Theorem \ref{thm2} is postponed to Section \ref{proof2}.

\subsection{Proof of Theorem \ref{thm}}\label{sec}

By properties of the Stieltjes transform (see \cite[Theorem B.9]{SA}), to prove Theorem \ref{thm}, it is equivalent to prove the following statement: \emph{For any $\ep>0$, we have}
\begin{eqnarray} \label{3:conP} \P\left(\exists z \in \mathbb{C}^+ \mbox{ such that } \left|s_{M_n}(z) - s_\SC(z) \right| \ge \ep \right) \to 0 \quad \mbox{ as } n \to \infty.
\end{eqnarray}
We prove Statement (\ref{3:conP}) in several steps.

First, we fix an arbitrary value $z \in \mathbb{C}^+$. The quadratic equation (\ref{3:snz}) has two solutions
$$s_n^{\pm}(z)=\frac{-(z-\Delta)\pm \sqrt{(z-\Delta)^2-4}}{2}.$$
As $n \to \infty$, from Theorem \ref{thm2} we have $\Delta(z) \to 0$, so $z-\Delta \in \mathbb{C}^+$ for large enough $n$. Since $s_n(z),s_\SC(z) \in \mathbb{C}^+$, we see that
\begin{equation}\label{3:snz2}
s_n(z)=s_n^+(z)=s_\SC(z-\Delta).
\end{equation}
Then by the continuity of $s_\SC$ and by taking $n \to \infty$, we obtain
\begin{equation}\label{3:limit}
 s_n(z) \to s_\SC(z).
\end{equation}
Moreover, by Lemma \ref{deviation}, for any fixed $\ep >0$, as $n \to \infty$, we have
\[\P\left(\left|s_{M_n}(z)-s_n(z)\right| \geq \ep\right) \to 0.\]
This and (\ref{3:limit}) immediately imply
\begin{equation}\label{3:prob}
\P\left(\left|s_{M_n}(z)-s_\SC(z)\right| \geq \ep\right) \to 0.
\end{equation}

Noting that (\ref{3:prob}) holds for any fixed $z \in \mathbb{C}^+$ and any $\ep>0$, so to prove (\ref{3:conP}), in the next step we need to show that the convergence is ``uniform'' for all $z \in \mathbb{C}^+$. To do this, we adopt a simple lattice argument.

For any $\tau,\ep \in (0,1)$, define the sets
$$\st':=\st \cap \{z=E+\i \eta: \eta \geq \tau\}$$
and
$$\lt:=\st' \cap \left\{z=\frac{\tau^2\ep}{4}(a+\i b): (a,b) \in \mathbb{Z}^2\right\}.$$
It is easy to see that $\lt \neq \emptyset$ and
$$\#\lt = O_\tau\left(\tau^{-4}\ep^{-2}\right) < \infty.$$
For any fixed $z \in \mathbb{C}^+$, define
$\Xi_{n,\ep}(z)$ to be the event
$$\left\{|s_{M_n}(z)-s_\SC(z)| < \ep \right\}.$$
By (\ref{3:prob}), for any $\delta > 0$, there is an $N(z,\tau,\ep,\delta)$ such that
$$n > N(z,\tau,\ep,\delta) \implies \P\left(\Xi_{n,\frac{\ep}{2}}(z)^{\bf c}\right) < \frac{\delta}{\#\lt}.$$
Here the set $\Xi_{n,\frac{\ep}{2}}(z)^{\bf c}$ denotes the complement of the event $\Xi_{n,\frac{\ep}{2}}(z)$. Then for any $n$ such that
\[ n > N(\tau,\ep,\delta):=\max_{z \in \lt} N(z,\tau,\ep,\delta),\]
we have
$$\P\left(\left(\bigcap_{z \in \lt} \Xi_{n,\frac{\ep}{2}}(z)\right)^{\bf c}\right)=\P\left(\bigcup_{z \in \lt} \Xi_{n,\frac{\ep}{2}}(z)^{\bf c}\right) < \delta. $$

Finally we consider the event $\bigcap_{z \in \lt} \Xi_{n,\frac{\ep}{2}}(z)$, that is,
\[ \left|s_{M_n}(z')-s_\SC(z') \right|< \frac{\ep}{2} \quad \forall z' \in \lt.\]
Recall from (\ref{2:lips}) that the Stieltjes transforms $s_{M_n}(z)$ and $s_\SC(z)$ are both $\tau^{-2}$-Lipschitz on the set $\st'$, and for any $z \in \st'$, we can find one $z' \in \lt$ such that
$$|z-z'| \leq \frac{\tau^2\ep}{4}. $$
So for this $z \in \st'$ we have
\begin{align*}
|s_{M_n}(z)-s_\SC(z)| &\leq |s_{M_n}(z)-s_{M_n}(z')|+|s_{M_n}(z')-s_\SC(z')|+|s_\SC(z')-s_\SC(z)|\\
&< \tau^{-2}|z-z'|+\frac{\ep}{2}+\tau^{-2}|z-z'|\\
& \le \ep.
\end{align*}
This means that
$$\bigcap_{z \in \lt} \Xi_{n,\frac{\ep}{2}}(z) \subset \bigcap_{z \in \st'} \Xi_{n,\ep}(z).$$
Therefore
$$\P\left(\left(\bigcap_{z \in \st'} \Xi_{n,\ep}(z)\right)^{\bf c}\right) \leq \P\left(\left(\bigcap_{z \in \lt} \Xi_{n,\frac{\ep}{2}}(z)\right)^{\bf c}\right) < \delta$$
for any $n > N(\tau,\ep,\delta)$.

Hence for any $\tau, \ep \in (0,1)$, we have
\begin{eqnarray*} \P\left(\exists z \in \st' \mbox{ such that } \left|s_{M_n}(z) - s_\SC(z) \right| \ge \ep \right) \to 0 \quad \mbox{ as } n \to \infty.
\end{eqnarray*}
Taking the limit $\tau \to 0^+$, we obtain the desired Statement (\ref{3:conP}). This completes the proof of Theorem \ref{thm}.

\section{Proof of Theorem \ref{thm1-2}}\label{proof1-2}
Now for fixed constants $c > 1$ and $\gamma_1,\gamma_2 \in (0,1)$, let us assume
\[ c^{-1}n^{\gamma_1} \leq p \leq cn^{\gamma_2}.\]
Similar in proving Theorem \ref{thm} in the previous section, here we assume Theorem \ref{thm2}. Then the main idea of proving Theorem \ref{thm1-2} is to provide a refined and quantitative version of Statement (\ref{3:conP}), so in each step of the proofs, we need to keep track of all the varying parameters as $n \to \infty$. 

First, the upper bound for $\Delta(z)$ in Theorem \ref{thm2} can be simplified as
\[ \Delta(z)=O_{c,\tau}\left(n^{-4\beta}\eta^{-3}\right), \]
where the constant $\beta>0$ is explicitly given in (\ref{beta}).

Let us define
\begin{eqnarray*}
\stt:=\st \bigcap \left\{z=E+\i \eta: \eta \ge n^{-\beta+\tau}\right\}.
\end{eqnarray*}
From now on, $C_{c,\tau}$ denotes some positive constant depending only on $c$ and $\tau$ whose value may vary at each occurrence. We can estimate the difference $|s_n(z)-s_\SC(z)|$ as follows.

\begin{lemma}\label{EStieltjes}
For any $z \in \stt$, we have
$$|s_n(z)-s_\SC(z)|=O_{c,\tau}\left(n^{-4\beta}\eta^{-4}\right).$$
\end{lemma}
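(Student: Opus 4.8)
The plan is to recognize the self-consistent equation (\ref{3:snz}) for $s_n(z)$ as the defining equation (\ref{SSC2}) of the semicircle transform evaluated at the perturbed argument $z-\Delta(z)$, and then to transport the smallness of $\Delta(z)$ to $s_n(z)-s_\SC(z)$ via a derivative estimate for $s_\SC$. First I would check that on $\stt$ the shift $z\mapsto z-\Delta(z)$ stays well inside the upper half-plane: combining $\eta\ge n^{-\beta+\tau}$ with the simplified bound $\Delta(z)=O_{c,\tau}(n^{-4\beta}\eta^{-3})$ recorded just before the lemma gives $|\Delta(z)|\le C_{c,\tau}(n^{-4\beta}\eta^{-4})\eta\le C_{c,\tau}n^{-4\tau}\eta$, so for all sufficiently large $n$ we have $|\Delta(z)|\le \eta/2$ uniformly for $z\in\stt$, hence $z-\Delta(z)\in\mathbb{C}^+$ with $\Im(z-\Delta(z))\ge \eta/2$. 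Since $s_n(z)\in\mathbb{C}^+$ and (\ref{3:snz}) is precisely (\ref{SSC2}) with $z$ replaced by $z-\Delta(z)$, the uniqueness of the $\mathbb{C}^+$-valued solution of (\ref{SSC2}) forces
\[ s_n(z)=s_\SC\bigl(z-\Delta(z)\bigr). \]
The finitely many small values of $n$ for which $|\Delta(z)|\le\eta/2$ might fail are harmless, since $|s_n(z)-s_\SC(z)|\le \eta^{-1}+1$ always and this is bounded by $C_{c,\tau}n^{-4\beta}\eta^{-4}$ on $\stt$ after enlarging the implied constant.

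Next I would differentiate (\ref{SSC2}) and insert (\ref{SSC}) to get $s_\SC'(\zeta)=-s_\SC(\zeta)\bigl(\zeta+2s_\SC(\zeta)\bigr)^{-1}=-s_\SC(\zeta)/\sqrt{\zeta^2-4}$. Using the standard bound $|s_\SC(\zeta)|\le 1$ on $\mathbb{C}^+$ together with $|\zeta+2s_\SC(\zeta)|\ge \Im\bigl(\zeta+2s_\SC(\zeta)\bigr)=\Im\zeta+2\,\Im s_\SC(\zeta)\ge \Im\zeta$ (valid since $\Im s_\SC(\zeta)>0$), we obtain $|s_\SC'(\zeta)|\le 1/\Im\zeta$. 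Every point of the segment joining $z$ to $z-\Delta(z)$ has imaginary part at least $\eta/2$, so integrating $s_\SC'$ along that segment gives
\[ |s_n(z)-s_\SC(z)|=\bigl|s_\SC(z-\Delta(z))-s_\SC(z)\bigr|\le |\Delta(z)|\cdot\frac{2}{\eta}=O_{c,\tau}\!\left(n^{-4\beta}\eta^{-4}\right), \]
which is the assertion of the lemma.

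The step needing the most care — and the one I would dwell on — is making the identification $s_n(z)=s_\SC(z-\Delta(z))$ legitimate uniformly over $\stt$: this is exactly where the spectral cutoff $\eta\ge n^{-\beta+\tau}$ is essential, being just strong enough to defeat the $\eta^{-3}$ growth of $\Delta(z)$ and keep $z-\Delta(z)$ in $\mathbb{C}^+$. A second point to watch is the bookkeeping of powers of $\eta$: the crude Lipschitz estimate $|s_\SC'(\zeta)|\le(\Im\zeta)^{-2}$ would only yield $\eta^{-5}$, whereas the sharper $|s_\SC'(\zeta)|\le(\Im\zeta)^{-1}$ coming from $|s_\SC|\le 1$ is precisely what produces the stated $\eta^{-4}$. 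An alternative that bypasses $s_\SC'$ is to subtract the two fixed-point equations, obtaining $s_n-s_\SC=\Delta\,s_ns_\SC/(s_ns_\SC-1)$, and to bound $|1-s_ns_\SC|$ from below by a constant multiple of $\eta$ through a short bootstrap together with $\inf_{\st}|s_\SC|>0$; passing through $s_\SC'$ is cleaner because it requires no bootstrap.
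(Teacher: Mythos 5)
Your proposal is correct and follows essentially the same route as the paper: both identify $s_n(z)=s_\SC(z-\Delta(z))$ on $\stt$ (the cutoff $\eta\ge n^{-\beta+\tau}$ making $|\Delta|$ small compared with $\eta$) and then convert $|\Delta|=O_{c,\tau}(n^{-4\beta}\eta^{-3})$ into the stated bound via the Lipschitz estimate $|s_\SC'(\zeta)|\le(\Im\zeta)^{-1}$, which the paper simply asserts as ``easily checked from (\ref{SSC})'' and you verify explicitly. Your extra care about the segment staying in $\mathbb{C}^+$ and about small $n$ only fills in routine details already implicit in the paper's argument.
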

\begin{proof}[Proof of Lemma \ref{EStieltjes}]
First, for large enough $n$, noting that
$$\Im(z-\Delta)\geq \eta-C_{c,\tau}n^{-4\beta}\eta^{-3} > 0,$$
we see that Equation (\ref{3:snz2}) holds for all $z \in \stt$. More precisely, we have
$$\Im(z-\Delta) > C_{c,\tau}\eta.$$
By using the fact $\left|\frac{\d s_\SC(z)}{\d z}\right| \leq \eta^{-1}$ which can be easily checked from (\ref{SSC}), we conclude that
$$|s_n(z)-s_\SC(z)|=\left|s_\SC(z-\Delta)-s_\SC(z) \right| \leq C_{c,\tau}\eta^{-1}|\Delta|\leq C_{c,\tau}n^{-4\beta}\eta^{-4}.$$
Then Lemma \ref{EStieltjes} is proved.
\end{proof}
Next we estimate the term $|s_{M_n}(z)-s_\SC(z)|$. An $n$-dependent event $\Xi$ is said to hold \emph{with high probability} if for any $D>0$, there is a quantity $N=N(D)>0$ such that $\P(\Xi) \geq 1-n^{-D}$ for any $n > N$.

\begin{theorem}\label{Stieltjes}
We have, with high probability,
$$|s_{M_n}(z)-s_\SC(z)| \leq n^\tau(n^{-\beta}+n^{-4\beta}\eta^{-4}) \quad \forall z \in \stt.$$
\end{theorem}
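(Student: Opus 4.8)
The plan is to upgrade the in-probability pointwise statement (\ref{3:prob}) and the deterministic estimate of Lemma~\ref{EStieltjes} into a uniform high-probability bound over $\stt$, by combining the concentration inequality of Lemma~\ref{deviation} with a lattice (net) argument and a union bound, keeping careful track of all $n$-dependent quantities. First I would fix $z \in \stt$ and write the triangle inequality
\[
|s_{M_n}(z) - s_\SC(z)| \le |s_{M_n}(z) - s_n(z)| + |s_n(z) - s_\SC(z)|,
\]
where the second term is $O_{c,\tau}(n^{-4\beta}\eta^{-4})$ by Lemma~\ref{EStieltjes}. For the first (random) term I would apply Lemma~\ref{deviation} with $\ep = n^{-\beta+\tau/2}$ (say): since $\eta \ge n^{-\beta+\tau}$ on $\stt$ and $p \ge c^{-1}n^{\gamma_1} \ge c^{-1} n^{4\beta}$, the exponent $p\eta^2\ep^2/8$ is at least of order $n^{\gamma_1} n^{-2\beta+2\tau} n^{-2\beta+\tau}$, which grows like a positive power of $n$ (using $4\beta \le \gamma_1$), so the deviation probability is super-polynomially small, hence $\le n^{-D}$ for any fixed $D$ once $n$ is large. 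This gives the claimed bound $|s_{M_n}(z)-s_\SC(z)| \le n^\tau(n^{-\beta} + n^{-4\beta}\eta^{-4})$ at the single point $z$ with high probability.

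Next I would promote this to a statement holding simultaneously for all $z \in \stt$. The set $\stt$ is contained in a box of size $O_\tau(1)$, and on $\stt$ both $s_{M_n}$ and $s_\SC$ are Lipschitz with constant $\eta^{-2} \le n^{2\beta - 2\tau}$ (from (\ref{2:lips}) and the corresponding bound for $s_\SC$), which is only polynomially large. So I would take a lattice $\mathbf{L}$ of mesh $n^{-K}$ for a sufficiently large fixed constant $K$ (depending on $\beta$ and $\tau$), intersected with $\stt$; then $\#\mathbf{L} = O(n^{2K})$, a polynomial number of points. Applying the single-point bound at each lattice point and a union bound over these $O(n^{2K})$ points costs only a polynomial factor, so the bad event still has probability $\le n^{-D'}$ for any $D'$ (absorbing the polynomial loss into the choice of $D$). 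Finally, for an arbitrary $z \in \stt$ I would pick the nearest lattice point $z'$ with $|z - z'| \le n^{-K}$ and use the Lipschitz bounds: the error introduced is at most $2 n^{2\beta-2\tau} n^{-K}$, which for $K$ large enough is negligible compared to $n^\tau(n^{-\beta}+n^{-4\beta}\eta^{-4})$ (one also needs $\eta$ to not drop below the lattice mesh, which holds by enlarging $K$ relative to $\beta$). Putting these together yields the uniform bound claimed in Theorem~\ref{Stieltjes}.

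The main obstacle I anticipate is the bookkeeping of exponents so that the concentration bound genuinely beats the union-bound loss uniformly over $\stt$: one must verify that $p \eta^2 \ep^2 \gtrsim n^{\delta}$ for some $\delta > 0$ \emph{uniformly} for $\eta$ ranging down to $n^{-\beta+\tau}$, which is exactly where the hypotheses $4\beta \le \gamma_1$ (so that $p \gtrsim n^{4\beta}$) and the cutoff $\eta \ge n^{-\beta+\tau}$ are used in tandem; and then checking that the Lipschitz constants $\eta^{-2}$, although $n$-dependent, remain polynomially bounded so that a polynomially fine lattice suffices. None of these steps is deep, but the constants $\beta$, $\tau$, $\gamma_1$, $\gamma_2$, $K$, $D$ must be threaded consistently, and the ``$n^\tau$'' slack in the statement is precisely what gives the room to absorb all these polynomial factors. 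A secondary point to be careful about is that Lemma~\ref{deviation} is stated for the probability space $\D^p$ of independent rows rather than $\OpI$; but as noted in the problem set-up this is harmless since the two spaces are contiguous (ratio of sizes $\to 1$), so the high-probability conclusion transfers.
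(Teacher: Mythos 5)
Your proposal is correct and follows essentially the same route as the paper: a pointwise application of Lemma~\ref{deviation} with deviation $n^{\tau/2-\beta}$ (whose exponent $p\eta^2\ep^2/8 \gtrsim n^{\gamma_1-4\beta+3\tau} \ge n^{3\tau}/(8c)$ is exactly the paper's bound), a polynomially sized lattice in $\stt$ with a union bound, the Lipschitz bound $\eta^{-2}\le n^{2\beta-2\tau}$ to pass from the net to all of $\stt$, and finally Lemma~\ref{EStieltjes} via the triangle inequality. The only differences (mesh $n^{-K}$ instead of the paper's $n^{-3\beta}$, and running the net argument on $s_{M_n}-s_\SC$ rather than $s_{M_n}-s_n$) are cosmetic.
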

\begin{proof}[Proof of Theorem \ref{Stieltjes}]
By the concentration inequality given in Lemma \ref{deviation}, we have
\begin{equation}\label{deviation1}
\P\left(\left|s_{M_n}(z)-s_n(z)\right| \geq n^{\frac{\tau}{2}-\beta}\right)\leq 2\exp\left(-\frac{n^{\gamma_1-4\beta+3\tau}}{8c}\right)\leq 2\exp\left(-\frac{n^{3\tau}}{8c}\right).
\end{equation}
Noting that the inequality (\ref{deviation1}) holds for any fixed $z \in \stt$. In order to prove Theorem \ref{Stieltjes}, we need an upper bound which is uniform for all $z \in \stt$. We apply a lattice argument again.

Let
$$\lttt:=\stt \cap \left\{z=n^{-3 \beta} (a+\i b): (a,b) \in \mathbb{Z}^2 \right\}.$$
Note that the set $\lttt \ne \emptyset$ and
$$\#\lttt \leq C_\tau n^{6\beta}.$$
Also, for any $z \in \stt$ and $\ep > 0$, define $\mathcal{E}_{n,\ep}(z)$ to be the event
$$ \left\{|s_{M_n}(z)-s_n(z)| \leq n^{\ep-\beta} \right\},$$
and $\mathcal{E}_{n,\ep}(z)^{\bf c}$ the complement. Then (\ref{deviation1}) can be rewritten as
$$\P\left(\mathcal{E}_{n,\frac{\tau}{2}}(z)^{\bf c} \right) \leq 2\exp\left(-\frac{n^{3\tau}}{8c}\right).$$
So we have
\begin{eqnarray}
\P\left(\left(\bigcap_{z \in \lttt} \mathcal{E}_{n,\frac{\tau}{2}}(z)\right)^{\bf c}\right)&=&\P\left(\bigcup_{z \in \lttt} \mathcal{E}_{n,\frac{\tau}{2}}(z)^{\bf c}\right) \nonumber \\
&\leq & C_\tau n^{6\beta}\exp\left(-\frac{n^{3\tau}}{8c}\right) \leq n^{-D} \label{deviation2}
\end{eqnarray}
for any $D > 0$ and $n > N(c,\gamma_1,\gamma_2,\tau,D)$.

Finally we consider the event $\bigcap_{z \in \lttt} \mathcal{E}_{n,\frac{\tau}{2}}(z)$,
that is,
\[|s_{M_n}(z')-s_n(z')| \le n^{\frac{\tau}{2}-\beta} \quad \forall z' \in \lttt.\]
Noting that for any $z \in \stt$, there is $z' \in \lttt$ such that
$$|z-z'| \leq n^{-3\beta}$$
and that $s_{M_n}(z)$ and $s_n(z)$ are both $n^{2\beta}$-Lipschitz on $\stt$, we obtain, for any $z \in \stt$,
\begin{align*}
\left|s_{M_n}(z)-s_n(z) \right| &\leq |s_{M_n}(z)-s_{M_n}(z')|+|s_{M_n}(z')-s_n(z')|+|s_n(z')-s_n(z)|\\
&< 2n^{2\beta}|z-z'|+n^{\frac{\tau}{2}-\beta} \leq n^{\tau-\beta}.
\end{align*}
This means that
$$\bigcap_{z \in \lttt} \mathcal{E}_{n,\frac{\tau}{2}}(z) \subset \bigcap_{z \in \stt} \mathcal{E}_{n,\tau}(z).$$
Hence by (\ref{deviation2}) we have
$$\P\left(\bigcap_{z \in \stt} \mathcal{E}_{n,\tau}(z)\right) \geq \P\left(\bigcap_{z \in \lttt} \mathcal{E}_{n,\frac{\tau}{2}}(z)\right) \geq 1-n^{-D}$$
for all $n > N(c,\gamma_1,\gamma_2,\tau,D)$.

Combining the above inequality with Lemma \ref{EStieltjes} completes the proof of Theorem \ref{Stieltjes}.
\end{proof}

\begin{proof}[Proof of Theorem \ref{thm1-2}]
As a standard application of the Helffer-Sj\"{o}strand formula via complex analysis, Theorem \ref{thm1-2} can be derived directly from Theorem \ref{Stieltjes}. This is quite well-known, and the computation is routine. Interested readers may refer to \cite[Section 8]{Local law} for a very similar analysis. We omit the details. 
\end{proof}

\section{Proof of Theorem \ref{thm2}}\label{proof2}
In this section we give a detailed proof of Theorem \ref{thm2}, where the condition that $\db \geq 5$ plays an important role.

Recall from the beginning of Section \ref{proof} that $\C$ is a linear code of length $n$ over $\F_q$ with $\db \geq 5$, $\psi$ is the standard additive character on $\F_q$, extended component-wisely to $\F_q^n$, $\D=\psi(\C)$, and $\Phi_n$ is a $p \times n$ random matrix whose rows are selected uniformly and independently from $\D$. This makes $\D^p$ a probability space, on which we use $\E$ to denote the expectation. Let $\G$ and $M_n$ be defined as in (\ref{3:gn}). Since all the entries of $\Phi_n$ are roots of unity, the diagonal entries of $M_n$ are all zero.


Let $x_{jk}$ be the $(j,k)$-th entry of $\Phi_n$. The following properties of $x_{jk}$, while very simple, depend crucially on the condition that $d^\bot \ge 5$.

\begin{lemma}\label{cor}
For any $\l \in [1\isep p]$, we have

(a) $\E(x_{\l j}\overline{x}_{\l k})=0$ if $j \neq k$;

(b) $\E(x_{\l j}x_{\l t}\overline{x}_{\l k}\overline{x}_{\l s})=0$ if the indices $j,t,k,s$ do not come in pairs; If the indices come in pairs, then $|\E(x_{\l j}x_{\l t}\overline{x}_{\l k}\overline{x}_{\l s})| \leq 1$.
\end{lemma}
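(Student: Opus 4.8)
\textbf{Proof proposal for Lemma \ref{cor}.}

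The plan is to reduce everything to the averaging identity (\ref{lem}), which says that summing $\psi(\mathbf{a}\cdot\mathbf{c})$ over $\mathbf{c}\in\C$ gives $\#\C$ if $\mathbf{a}\in\Cb$ and $0$ otherwise. First I would recall that the $\l$-th row of $\Phi_n$ is $\psi(\mathbf{c})$ for a codeword $\mathbf{c}$ chosen uniformly from $\C$, so that $x_{\l j}=\psi(c_j)=\zeta^{\tr(c_j)}$ where $c_j$ is the $j$-th coordinate of $\mathbf{c}$, and $\overline{x}_{\l k}=\psi(-c_k)$. Hence any monomial $x_{\l j}x_{\l t}\overline{x}_{\l k}\overline{x}_{\l s}$ equals $\psi(\mathbf{a}\cdot\mathbf{c})$ for the vector $\mathbf{a}\in\F_q^n$ having $+1$ in positions $j,t$, $-1$ in positions $k,s$, and $0$ elsewhere (with the obvious modification if some of $j,t,k,s$ coincide — the coefficients then add). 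Taking expectation over the uniform choice of $\mathbf{c}$ turns $\E$ of the monomial into exactly the left side of (\ref{lem}), so the expectation is $1$ if $\mathbf{a}\in\Cb$ and $0$ otherwise.

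For part (a), the vector $\mathbf{a}$ associated to $x_{\l j}\overline{x}_{\l k}$ with $j\ne k$ has Hamming weight exactly $2$ (coefficients $+1$ at $j$, $-1$ at $k$). Since $\db\ge 5>2$, the only weight-$\le 2$ vector in $\Cb$ is $\mathbf{0}$, so $\mathbf{a}\notin\Cb$ and the expectation vanishes. For part (b), I consider the vector $\mathbf{a}$ attached to $x_{\l j}x_{\l t}\overline{x}_{\l k}\overline{x}_{\l s}$. When the four indices do \emph{not} come in pairs, after collecting coefficients $\mathbf{a}$ is a nonzero vector supported on at most $4$ coordinates; the key point is that its Hamming weight is between $1$ and $4$ and in particular $\mathbf{a}\ne\mathbf{0}$. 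Again $\db\ge5$ forces $\mathbf{a}\notin\Cb$, giving expectation $0$. When the indices do come in pairs, the coefficients cancel and $\mathbf{a}$ may be $\mathbf{0}$ (or of small weight), in which case the expectation is $1$ if $\mathbf{a}\in\Cb$ and $0$ otherwise; since $|\psi(\cdot)|=1$ the monomial itself has modulus $1$, so the expectation has modulus at most $1$ regardless. This yields the stated bound $|\E(x_{\l j}x_{\l t}\overline{x}_{\l k}\overline{x}_{\l s})|\le 1$.

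The only genuinely delicate point — more bookkeeping than obstacle — is the phrase ``do not come in pairs'' in part (b): I would spell out that this means there is no way to partition $\{j,t\}\cup\{k,s\}$ (as a multiset) into two matched pairs, one barred index with one unbarred index, so that all contributions cancel. I would enumerate the coincidence patterns: if all of $j,t,k,s$ are distinct then $\mathbf{a}$ has weight $4$; if exactly one coincidence occurs among them the resulting $\mathbf{a}$ still has weight $2$ or $3$ unless that coincidence is between a barred and an unbarred index producing a pair, which is the excluded case; and so on. In every ``not in pairs'' configuration the resulting $\mathbf{a}$ is a nonzero vector of weight at most $4<5\le\db$, hence $\mathbf{a}\notin\Cb$. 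Once this case analysis is laid out, the rest is immediate from (\ref{lem}) and $|\psi|=1$.
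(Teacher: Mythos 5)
Your overall route is the same as the paper's: write the monomial as $\psi(\mathbf{a}\cdot\mathbf{c})$ for an explicit vector $\mathbf{a}\in\F_q^n$ of weight at most $4$, average over $\mathbf{c}\in\C$ via (\ref{lem}), and invoke $\db\ge 5$ to conclude $\mathbf{a}\notin\Cb$ whenever $\mathbf{a}\ne\mathbf{0}$; part (a) and the trivial bound $|\E(\cdot)|\le 1$ are handled identically. However, your treatment of the phrase ``come in pairs'' in part (b) contains a genuine error. You define ``in pairs'' as a matching of each unbarred index with a barred index of equal value, and then claim that in every remaining configuration the vector $\mathbf{a}$ is nonzero. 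This fails in characteristic $2$, which is precisely the paper's main case $q=2$: take $j=t$, $k=s$ with $j\ne k$. Under your definition this is ``not in pairs'' (no unbarred index equals a barred one), yet the coefficients are $+2$ at position $j$ and $-2$ at position $k$, so $\mathbf{a}=\mathbf{0}$ in $\F_2$; indeed $x_{\l j}x_{\l t}\overline{x}_{\l k}\overline{x}_{\l s}=x_{\l j}^2\overline{x}_{\l k}^2\equiv 1$ when $q=2$, and the expectation is $1$, not $0$. So the statement you are proving, read with your definition, is false, and the step ``in every not-in-pairs configuration $\mathbf{a}$ is a nonzero vector of weight at most $4$'' is exactly where the cancellation of coefficients modulo the characteristic is overlooked (your partial enumeration stops with ``and so on'' before reaching this doubly-coincident case).

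The repair is interpretive but essential: ``the indices come in pairs'' must mean that the multiset $\{j,t,k,s\}$ splits into two pairs of equal values irrespective of conjugation, so that the configuration $j=t$, $k=s$ belongs to the ``paired'' clause where only $|\E(\cdot)|\le 1$ is asserted. With that reading, whenever the indices do not pair up some position carries coefficient $\pm 1$ (a value occurring with odd multiplicity contributes $\pm1$ after cancellation), hence $0\ne\mathrm{wt}(\mathbf{a})\le 4<\db$ in every characteristic, and your argument then coincides with the paper's proof. This reading is also the one the application requires: in the proof of Lemma \ref{Zl2}(b) the surviving terms include the quadruples $(j,t,k,s)=(j,j,k,k)$, which are kept with the bound $1$ rather than discarded as zero.
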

\begin{proof}[Proof of Lemma \ref{cor}]
(a) It is easy to see that
\[\E(x_{\l j}\overline{x}_{\l k})=\frac{1}{N}\sum_{\mathbf{c} \in \C}\psi(c_j-c_k)=\frac{1}{N}\sum_{\mathbf{c} \in \C}\psi(\mathbf{a}_1 \cdot \mathbf{c}),\]
where $\mathbf{c}=(c_1,c_2,\cdots,c_n) \in \C$ and $\mathbf{a}_1=(0, \cdots, 0, 1, 0 \cdots, 0, -1, 0 \cdots, 0) \in \F_q^n$. Here in $\mathbf{a}_1$ the 1 and $-1$ appear at the $j$-th and $k$-th entries respectively. Since $\db \geq 5$, we have $\mathbf{a}_1 \notin \Cb$, and the desired result follows directly from (\ref{lem}).

(b) It is easy to see that
\[\E(x_{\l j}x_{\l t}\overline{x}_{\l k}\overline{x}_{\l s})=\frac{1}{N}\sum_{\mathbf{c} \in \C}\psi(c_j+c_t-c_k-c_s)=\frac{1}{N}\sum_{\mathbf{c} \in \C}\psi(\mathbf{a}_2 \cdot \mathbf{c}),\]
where the vector $\mathbf{a}_2 \in \F_q^n$ is formed from the all-zero vector by adding $1$s to the $j$-th and $t$-th entries and then adding $-1$s from the $k$-th and $s$-th entries. If the indices $j,t,k,s$ do not come in pairs, then $0 \ne \mathrm{wt}(\mathbf{a}_2) \le 4$. Since $\db \geq 5$, we have $\E(x_{\l j}x_{\l t}\overline{x}_{\l k}\overline{x}_{\l s})=0$ by (\ref{lem}). The second statement of \emph{(b)} is trivial since $|x_{ij}|=1$ for any $i,j$.
\end{proof}

For any $\l \in [1 \isep p]$, let $\Phil$ be the $p \times n$ matrix obtained from $\Phi_n$ by changing the whole $\l$-th row to {\bf 0}. Define
\[ \Gnl:=\frac{1}{n}\Phil{\Phil}^*, \quad \Ml:=\sqrt{\frac{n}{p}}\left(\Gnl-I_p\right).\]
Denote by $\om(\l)$ the $\l$-th row of $\Phi_n$, and $\ml$ the $\l$-th column of $M_n$. It is easy to see that
\begin{eqnarray} \label{4:ml} \ml=\frac{1}{\sqrt{pn}}\Phil\om(\l)^*.\end{eqnarray}
Let
\[ G:=G(z)=\left(M_n-z I_p\right)^{-1}, \quad  \Gl:=\Gl(z)=\left(\Ml-z I_p\right)^{-1}\]
be the Green functions of $M_n$ and $\Ml$ respectively for the complex variable $z \in \mathbb{C}^+$.

For the Green function $G$, we start with the resolvent identity (\ref{diagonal}) for $T=\emptyset$. Using (\ref{4:ml}), we can express the third term on the right side of (\ref{diagonal}) as
\begin{align*}
\ml^*\Gl\ml&=\frac{1}{pn}\om(\l)\Phi_n^{(\l)*}\Gl\Phil\om(\l)^* \nonumber\\
&=\frac{1}{pn}\Tr \left({\Phil}^*\Gl\Phil\om(\l)^*\om(\l) \right).
\end{align*}
By the identity
$$(\om(\l)^*\om(\l))_{jk}=\delta_{jk}+(1-\delta_{jk})x_{\l j}\overline{x}_{\l k},$$
the right hand side can be further expressed as
\begin{align*}
\frac{1}{pn}\Tr \left({\Phil}^*\Gl\Phil\right)+\Zl
=\frac{1}{p}\Tr \left(\Gl\Gnl\right)+\Zl,
\end{align*}
where
\begin{equation}\label{Zl}
\Zl=\sum_{j \neq k} a_{jk} x_{\l j}\overline{x}_{\l k}.
\end{equation}
Here the indices $j,k$ vary in $[1\isep n]$ and $a_{jk}$'s are the $(jk)$-th entry of the $n \times n$ matrix $(a_{jk})$ given by
\begin{equation}\label{ajk}
(a_{jk})=\frac{1}{pn}{\Phil}^*\Gl\Phil.
\end{equation}
Hence the resolvent identity (\ref{diagonal}) yields
\begin{align*}
\frac{1}{G_{\l\l}}&=M_{\l\l}-z-\frac{1}{p}\Tr \left(\Gl\Gnl\right)-\Zl \\
&=-z-\frac{1}{p}\Tr \Gl\left(\sqrt{\frac{p}{n}}\Ml+I_p\right)-\Zl.
\end{align*}
Expanding the second term on the right, we obtain
\begin{align}
\frac{1}{G_{\l\l}}=-z-s_n(z)+\Yl,\label{diagonal2}
\end{align}
where
\begin{equation}\label{Yl}
\Yl=s_n(z)-\sqrt\frac{p}{n}-\left(\frac{1}{p}+\frac{z}{\sqrt{pn}}\right)\Tr \Gl-\Zl.
\end{equation}

\subsection{Estimates of $\Zl$ and $\Yl$}

The random variables $Z_{\l}$ and $Y_{\l}$ depend on the complex value $z =E+\i \eta \in \mathbb{C}^+$. For any fixed constant $\tau>0$, recall $\st$ defined in (\ref{st}). Throughout this section we always assume $z \in \st$. 

\begin{lemma}\label{Zl2}
Let $z \in \st$. Then for any $\l \in [1\isep p]$, we have

(a) $\E^{(\l)}\Zl=\E\Zl=0$. Here $\E^{(\l)}$ is the conditional expectation given $\{x_{jk}: j \ne \l\}$;

(b) $\E|\Zl|^2=O_{\tau}(p^{-1}\eta^{-2})$.
\end{lemma}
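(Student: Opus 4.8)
The plan is to prove parts (a) and (b) of Lemma~\ref{Zl2} by exploiting the conditional independence structure of $\Zl$ together with the moment bounds from Lemma~\ref{cor}. The key observation is that, by definition, the matrix $(a_{jk})$ in (\ref{ajk}) depends only on $\Phil$, hence only on $\{x_{mk}: m \ne \l\}$; meanwhile $\Zl = \sum_{j \ne k} a_{jk} x_{\l j} \overline{x}_{\l k}$ is bilinear in the $\l$-th row entries $x_{\l 1}, \dots, x_{\l n}$, which are the coordinates of a single uniformly random codeword. So everything should be computed by first conditioning on $\Phil$ (applying $\E^{(\l)}$), then using the orthogonality relations.

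For part (a), I would write $\E^{(\l)}\Zl = \sum_{j \ne k} a_{jk}\, \E^{(\l)}(x_{\l j}\overline{x}_{\l k})$. Since $\E^{(\l)}$ averages only over the choice of the $\l$-th codeword (which is independent of the others), $\E^{(\l)}(x_{\l j}\overline{x}_{\l k}) = \frac{1}{N}\sum_{\mathbf{c}\in\C}\psi(c_j - c_k)$, which is exactly the quantity shown to vanish in Lemma~\ref{cor}(a) because $j \ne k$ and $\db \ge 5$. Hence $\E^{(\l)}\Zl = 0$ identically, and taking a further expectation gives $\E\Zl = 0$.

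For part (b), by the tower property $\E|\Zl|^2 = \E\big[\E^{(\l)}|\Zl|^2\big]$, so it suffices to bound $\E^{(\l)}|\Zl|^2$ with $\Phil$ held fixed. Expanding,
\[
\E^{(\l)}|\Zl|^2 = \sum_{j \ne k}\sum_{s \ne t} a_{jk}\,\overline{a_{st}}\, \E^{(\l)}\!\left(x_{\l j}\overline{x}_{\l k} \overline{x}_{\l s} x_{\l t}\right).
\]
By Lemma~\ref{cor}(b), the fourth-moment factor vanishes unless the indices $\{j,k,s,t\}$ come in pairs, and in that case it is bounded by $1$ in absolute value. Since $j \ne k$ and $s \ne t$, the surviving configurations are essentially $(s,t) = (j,k)$ or $(s,t) = (k,j)$ (plus possibly the degenerate diagonal-type terms when $j=t, k=s$, which are the same set), so the sum collapses to $O\big(\sum_{j \ne k} |a_{jk}|^2\big)$, i.e. $O\big(\sum_{j,k}|a_{jk}|^2\big)$ up to the excluded diagonal. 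Now $\sum_{j,k}|a_{jk}|^2 = \|(a_{jk})\|_F^2 = \tr\big((a_{jk})(a_{jk})^*\big)$, and from (\ref{ajk}) this equals $\frac{1}{p^2 n^2}\tr\big({\Phil}^*\Gl\Phil{\Phil}^*{\Gl}^*\Phil\big) = \frac{1}{p^2}\tr\big(\Gnl\Gl\Gnl{\Gl}^*\big)$ after using $\frac1n\Phil{\Phil}^* = \Gnl$. One then bounds this using $\|{\Gl}\|_{\mathrm{op}} \le \eta^{-1}$ and $\frac1p\tr\big((\Gnl)^2\big) = O_\tau(1)$ on $\st$ (since $\Gnl$ is a positive semidefinite Gram-type matrix whose normalized trace is controlled; more carefully $\tr(\Gnl\Gl\Gnl{\Gl}^*) \le \eta^{-2}\tr\big((\Gnl)^2\big)$ and $\tr((\Gnl)^2) = O(p)$ because $\Gnl = I_p + \sqrt{p/n}\,\Ml$ has spectrum staying bounded... actually one must be a bit careful here). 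This yields $\sum_{j,k}|a_{jk}|^2 = O_\tau(p^{-1}\eta^{-2})$, and hence $\E^{(\l)}|\Zl|^2 = O_\tau(p^{-1}\eta^{-2})$ deterministically, so the same bound holds after taking $\E$.

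The main obstacle I anticipate is the deterministic estimate $\sum_{j,k}|a_{jk}|^2 = O_\tau(p^{-1}\eta^{-2})$, i.e. controlling $\frac{1}{p^2}\tr\big(\Gnl\Gl\Gnl{\Gl}^*\big)$ uniformly on $\st$. The factor $\|{\Gl}{\Gl}^*\|_{\mathrm{op}} \le \eta^{-2}$ is immediate, but one needs that $\tr\big((\Gnl)^2\big) = O(p)$ (equivalently that the eigenvalues of $\Gnl$ do not blow up), which is not automatic from the definition alone and must be argued from the structure $\Gnl = \frac1n \Phil{\Phil}^*$ — here all entries of $\Phil$ are roots of unity or zero, so $\|\Phil\|_F^2 \le pn$ and thus $\tr(\Gnl) = \frac1n\|\Phil\|_F^2 \le p$; combined with positive semidefiniteness this is not quite enough for $\tr((\Gnl)^2)$, so one should instead bound $\tr\big(\Gnl\Gl\Gnl{\Gl}^*\big) \le \eta^{-2}\,\|\Gnl\|_{\mathrm{op}}\,\tr(\Gnl)$ and use that $\|\Gnl\|_{\mathrm{op}} = O(1)$ — but that operator-norm bound is itself a nontrivial input (it is essentially the kind of statement one gets from a priori semicircle-type control). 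The cleanest route is probably to observe $\Gnl \preceq \G + (\text{rank-one correction})$ and invoke whatever a priori operator-norm bound on $\G$ is available in this regime; I expect the authors handle this via a direct Frobenius-norm computation using $|x_{jk}| \le 1$, writing $a_{jk}$ out entrywise and summing, rather than through operator norms, so that the only spectral input needed is $\|\Gl\|_{\mathrm{op}} \le \eta^{-1}$.
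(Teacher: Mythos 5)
Part (a) and the first half of part (b) are fine and match the paper: you reduce, exactly as the authors do, to the bound $\E|\Zl|^2 \le C\sum_{j,k}\E|a_{jk}|^2 = \frac{C}{p^2}\,\E\,\Tr\bigl(\Gl\,\Gnl\,\Gl(\bar z)\,\Gnl\bigr)$. The genuine gap is in how you finish this estimate, and it sits precisely where you yourself hesitate. Your proposed routes all require an a priori spectral bound on $\Gnl$ — either $\Tr\bigl((\Gnl)^2\bigr)=O(p)$ or $\|\Gnl\|_{\mathrm{op}}=O(1)$ — and neither is available here. Indeed, $\|\Gnl\|_{\mathrm{op}}=O(1)$ is essentially extreme-eigenvalue control, which is nowhere established in this setting, and $\Tr\bigl((\Gnl)^2\bigr)=\frac{1}{n^2}\sum_{j,k}|\langle\omega(j),\omega(k)\rangle|^2$ can a priori be as large as order $p^2$ once the inner-product condition (\ref{1:srip}) is dropped — and removing that condition is the whole point of this paper, so you cannot smuggle it back in. Your fallback guess (``direct Frobenius-norm computation using $|x_{jk}|\le 1$, entrywise'') also does not work: entrywise bounds on $a_{jk}=\frac{1}{pn}\sum_{u,v}\overline{x}_{uj}G^{(\l)}_{uv}x_{vk}$ lose far too much and give nothing close to $p^{-1}\eta^{-2}$.

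What the paper actually does is a small but essential algebraic observation: $\Gnl=I_p+\sqrt{p/n}\,\Ml$ is an affine function of $\Ml$, hence commutes with $\Gl=(\Ml-zI_p)^{-1}$, so the trace diagonalizes simultaneously in the eigenbasis of $\Ml$,
\begin{equation*}
\Tr\bigl(\Gl\,\Gnl\,\Gl(\bar z)\,\Gnl\bigr)=\sum_{j=1}^{p}\frac{\bigl(1+\sqrt{p/n}\,\lambda_j^{(\l)}\bigr)^2}{|\lambda_j^{(\l)}-z|^2},
\end{equation*}
and each summand is bounded \emph{deterministically} by $C\bigl(\tfrac{p}{n}+\tfrac{p}{n}\tfrac{|z|^2}{\eta^2}+\tfrac{1}{\eta^2}\bigr)$, because any large eigenvalue $\lambda_j^{(\l)}$ in the numerator is damped by the same eigenvalue in the resolvent denominator. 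Summing over $j$, dividing by $p^2$, and using $p\le n$, $|z|\le\sqrt{2}\,\tau^{-1}$ on $\st$ gives $O_\tau(p^{-1}\eta^{-2})$ with no spectral input on $\Gnl$ whatsoever. Without this commutation/eigenvalue-damping step your argument does not close, so as written the proof of (b) is incomplete.
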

\begin{proof}[Proof of Lemma \ref{Zl2}]
(a) Since the rows of $\Phi_n$ are independent, the entries $a_{jk}$ as defined in (\ref{ajk}) are independent with $x_{\l j}$ and $x_{\l k}$. Hence from the definition of $\Zl$ in (\ref{Zl}) and statement (a) of Lemma \ref{cor}, we have
$$\E^{(\l)}\Zl=\sum_{j \neq k}a_{jk}\E(x_{\l j}\overline{x}_{\l k})=0.$$
The proof of the result on $\E\Zl$ is similar by replacing $a_{jk}$ with $\E a_{jk}$.

(b) Expanding $|\Zl|^2$ and taking expectation $\E$ inside, noting that the rows of $\Phi_n$ are independent, we have
\begin{align*}
\E|\Zl|^2&=\E\left|\sum_{j \neq k}a_{jk}x_{\l j}\overline{x}_{\l k}\right|^2\\
&=\sum_{\substack{j\neq k\\s\neq t}}\E(a_{jk}\overline{a}_{st})\E(x_{\l j}x_{\l t}\overline{x}_{\l k}\overline{x}_{\l s}).
\end{align*}
Since $\db \geq 5$, by using statement (b) of Lemma \ref{cor}, we find
$$\E|\Zl|^2 \leq C\sum_{j,k} \E|a_{jk}|^2=C\E \Tr ((a_{jk})(a_{jk})^*),$$
where $C$ is an absolute constant which may be different in each appearance. Using the definition of $(a_{jk})$ in (\ref{ajk}) we have
\begin{align*}
\E|\Zl|^2
&\leq\frac{C}{p^2}\E \Tr \left(\frac{1}{n^2}{\Phil}^*\Gl\Phil{\Phil}^*\Gl(\bar{z})\Phil\right)\\
&=\frac{C}{p^2}\E \Tr [(\Ml-z)^{-1}\Gnl(\Ml-\bar{z})^{-1}\Gnl]\\
&=\frac{C}{p^2}\E \sum_{j=1}^{p} \frac{\left(\sqrt\frac{p}{n}\lambda_j^{(\l)}+1\right)^2}{(\lambda_j^{(\l)}-z)(\lambda_j^{(\l)}-\bar{z})}.
\end{align*}
Expanding the terms on the right, we can easily obtain
\begin{align*}
\E|\Zl|^2
&\leq \frac{C}{p^2}\E \sum_{j=1}^{p}\left(\frac{p}{n}\left(1+\frac{|z|^2}{|\lambda_j^{(\l)}-z|^2}\right)+\frac{1}{|\lambda_j^{(\l)}-z|^2}\right)\\
&\leq C\left(\frac{1}{n}+\frac{|z|^2}{n\eta^2}+\frac{1}{p\eta^2}\right)\leq \frac{C_{\tau}}{p\eta^2}.
\end{align*}
Here $\lambda_j^{(\l)} \in \mathbb{R} (1 \leq j \leq p)$ are the eigenvalues of $\Ml$, and $C_{\tau}$ is a positive constant depending only on $\tau$ whose value may vary in each occurrence.
\end{proof}
The above estimations lead to the following estimations of $\Yl$.
\begin{lemma}\label{Yl2}
Let $z \in \st$. Then for any $\l \in [1\isep p]$, we have

(a) $\E \Yl=O_{\tau}\left(\eta^{-1}(p^{-1}+(p/n)^{\frac{1}{2}})\right)$;

(b) $\E|\Yl|^2=O_{\tau}\left(\eta^{-2}(p^{-1}+p/n)\right)$.
\end{lemma}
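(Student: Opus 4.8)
\textbf{Proof proposal for Lemma \ref{Yl2}.}

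The plan is to estimate $\E\Yl$ and $\E|\Yl|^2$ term by term, using the decomposition of $\Yl$ in (\ref{Yl}), namely
\[
\Yl = \left(s_n(z)-\sqrt{\tfrac{p}{n}}\right) - \left(\tfrac{1}{p}+\tfrac{z}{\sqrt{pn}}\right)\Tr\Gl - \Zl.
\]
The key observation linking the first two terms is that $s_n(z) = \tfrac{1}{p}\E\Tr G$, and that the interlacing inequality (\ref{Interlacing}) gives $|\Tr G - \Tr\Gl| \le C\eta^{-1}$, so that $\tfrac{1}{p}\E\Tr\Gl = s_n(z) + O(p^{-1}\eta^{-1})$. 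Thus the ``deterministic'' part of $\Yl$ (i.e.\ its expectation) collapses: writing $\tfrac{1}{p}\Tr\Gl = s_n(z) + R$ with $\E R = O(p^{-1}\eta^{-1})$, one finds that the $\tfrac{1}{p}\Tr\Gl$ piece nearly cancels the $s_n(z)$ piece, leaving behind only the $\sqrt{p/n}$ term, the $\tfrac{z}{\sqrt{pn}}\Tr\Gl$ term (which is $O_\tau(\sqrt{p/n}\cdot\eta^{-1})$ since $|\Tr\Gl|\le p\eta^{-1}$ and $|z|\le \sqrt2\,\tau^{-1}$ on $\st$), and the $O(p^{-1}\eta^{-1})$ remainder. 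Together with $\E\Zl = 0$ from Lemma \ref{Zl2}(a), this yields part (a): $\E\Yl = O_\tau(\eta^{-1}(p^{-1}+\sqrt{p/n}))$.

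For part (b), I would apply the elementary inequality $|\Yl|^2 \le 3\big(|s_n(z)-\sqrt{p/n}|^2 + |\tfrac1p+\tfrac{z}{\sqrt{pn}}|^2|\Tr\Gl|^2 + |\Zl|^2\big)$ and bound each expectation separately. The $\Zl$ term is exactly Lemma \ref{Zl2}(b), giving $O_\tau(p^{-1}\eta^{-2})$. For the middle term, $|\tfrac1p+\tfrac{z}{\sqrt{pn}}|^2 = O_\tau(p^{-1}+ (pn)^{-1}) = O_\tau(p^{-1})$ on $\st$ (using $p \le n$), and $|\Tr\Gl|^2 \le p^2\eta^{-2}$, so this term contributes $O_\tau(p\eta^{-2})$. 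Wait --- this is too crude; instead one should keep $s_n(z)$ bounded and write $\tfrac{1}{p}\Tr\Gl = s_n(z) + R'$ where $R' = \tfrac1p(\Tr\Gl - \E\Tr G)$ satisfies $|R'|\le C\eta^{-1}$ deterministically (combining interlacing with $\|G\|\le\eta^{-1}$ type bounds), so that $|\tfrac1p + \tfrac{z}{\sqrt{pn}}||\Tr\Gl|$ can be reorganized as $\big(\tfrac1p + \tfrac{z}{\sqrt{pn}}\big)\big(p\,s_n(z) + p R'\big)$; the $s_n(z)$ contribution is $O_\tau(\sqrt{p/n})$ in absolute value (the $\tfrac1p\cdot p\, s_n$ piece is cancelled against the first term of $\Yl$ up to $O_\tau(\sqrt{p/n})$), and the $p R'$ contribution is $O_\tau(\eta^{-1}(1 + \sqrt{p/n}))$, hence $O_\tau(\eta^{-1})$. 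Squaring, these pieces give $O_\tau(p/n + \eta^{-2})$, which combined with the $\Zl$ bound $O_\tau(p^{-1}\eta^{-2})$ is absorbed into $O_\tau(\eta^{-2}(p^{-1} + p/n))$ after noting $\eta^{-2} \le \eta^{-2}$ trivially and $p^{-1}\eta^{-2}$ is the dominant small-$\eta$ term while $p/n$ dominates for $\eta$ bounded below.

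The main obstacle --- and the point requiring care --- is the near-cancellation between $s_n(z)$ and $\tfrac1p\Tr\Gl$: one must organize the algebra so that the large quantity $\tfrac1p\Tr\Gl$ (of size $\approx s_n(z) = O_\tau(1)$, not small) is never bounded by its trivial size $p\eta^{-1}$ inside $\Yl$, but is always paired with the $s_n(z)$ term and with the small prefactors. Concretely, I would rewrite (\ref{Yl}) once and for all as
\[
\Yl = -\sqrt{\tfrac{p}{n}} + \tfrac1p\big(\E\Tr G - \Tr\Gl\big) - \tfrac{z}{\sqrt{pn}}\Tr\Gl - \Zl,
\]
and then the four summands are individually controlled: the first is deterministic of size $\sqrt{p/n}$; the second has absolute value $\le C\eta^{-1}/p$ deterministically by interlacing plus $\|G^{(\l)} - \E G\|$-type bounds (or just $|\Tr G - \Tr\Gl|\le C\eta^{-1}$ together with $|\Tr G - \E\Tr G|$ being controlled by the concentration in Lemma \ref{deviation}, which already gives $\E|\tfrac1p\Tr G - s_n|^2 = O(p^{-1}\eta^{-2})$); the third has absolute value $\le |z|(pn)^{-1/2}\cdot p\eta^{-1} = O_\tau(\sqrt{p/n}\,\eta^{-1})$ deterministically; and the fourth is handled by Lemma \ref{Zl2}. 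Taking expectations gives (a) and taking second moments (using $(\sum a_i)^2 \le 4\sum a_i^2$) gives (b), with all error terms of the claimed orders.
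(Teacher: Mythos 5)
Your part (a) is the paper's own argument: you rewrite $s_n(z)-\frac{1}{p}\Tr \Gl=\frac{1}{p}\left(\E\Tr G-\Tr \Gl\right)$, use $\E\Zl=0$, the interlacing bound (\ref{Interlacing}) for that difference, and the trivial bound $|\Tr \Gl|\le p\eta^{-1}$ for the $\frac{z}{\sqrt{pn}}$ piece; this matches the paper verbatim. For part (b) you take a correct but genuinely different route. The paper writes $\E|\Yl|^2=|\E\Yl|^2+V_1+V_2$ with $V_1=\E|\Yl-\E^{(\l)}\Yl|^2=\E|\Zl|^2$ (the $\Zl$ fluctuation is isolated exactly by conditioning on the rows other than $\l$) and bounds $V_2=\left|\frac{1}{p}+\frac{z}{\sqrt{pn}}\right|^2\E|\Tr \Gl-\E\Tr \Gl|^2$ by a row-by-row martingale decomposition plus interlacing, yielding $\E|\Tr \Gl-\E\Tr \Gl|^2\le Cp\eta^{-2}$. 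You instead bound the four summands of your rewritten $\Yl$ in mean square and add them up; the only random fluctuation besides $\Zl$ is $\frac{1}{p}(\E\Tr G-\Tr G)$, which you control by Lemma \ref{deviation} (integrating its sub-Gaussian tail indeed gives $\E|s_{M_n}(z)-s_n(z)|^2=O(p^{-1}\eta^{-2})$, since $M_n$ is exactly the matrix $S$ of that lemma). Since Lemma \ref{deviation} is itself a bounded-differences argument over the independent rows, your estimate and the paper's $V_2$ bound are two implementations of the same concentration idea; yours reuses an existing lemma and avoids the explicit martingale computation, while the paper's is self-contained and isolates $\Zl$ more cleanly. One slip to correct: your claim that $\frac{1}{p}\left|\E\Tr G-\Tr \Gl\right|\le C\eta^{-1}/p$ holds \emph{deterministically} is false --- only $|\Tr G-\Tr \Gl|\le C\eta^{-1}$ is deterministic, and $|\Tr G-\E\Tr G|$ can a priori be as large as $2p\eta^{-1}$, which alone would only give the useless bound $O(\eta^{-2})$ for this term --- but your parenthetical fallback via Lemma \ref{deviation} is exactly what is needed, so the proof goes through once that sentence is replaced by the concentration bound.
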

\begin{proof}[Proof of Lemma \ref{Yl2}]
(a) Taking expectation on $\Yl$ in (\ref{Yl}) and noting that $\E \Zl=0$, we get
\begin{align*}
\E \Yl &=\frac{1}{p}\E(\Tr G-\Tr \Gl)-\sqrt\frac{p}{n}-\frac{z}{\sqrt{pn}}\E\Tr\Gl.
\end{align*}
By the eigenvalue interlacing property in (\ref{Interlacing}) and the trivial bound $|G_{jj}^{(\l)}| \leq \eta^{-1}$, we get
$$|\E \Yl| \leq \frac{C}{p\eta}+\sqrt\frac{p}{n}+\sqrt\frac{p}{n}\frac{|z|}{\eta} \leq \frac{C}{p\eta}+\frac{C_{\tau}}{\eta}\sqrt\frac{p}{n}.$$
(b) We split $\E|\Yl|^2$ as
\begin{equation}\label{Yl3}
\E|\Yl|^2=\E|\Yl-\E\Yl|^2+|\E\Yl|^2=V_1+V_2+|\E\Yl|^2,
\end{equation}
where
$$V_1=\E|\Yl-\E^{(\l)}\Yl|^2, \quad V_2=\E|\E^{(\l)}\Yl-\E\Yl|^2. $$
We first estimate $V_1$. Using (a) of Lemma \ref{Zl2}, we see that
$$\Yl-\E^{(\l)}\Yl=-\Zl+\E^{(\l)}\Zl=-\Zl$$
Hence by (b) of Lemma \ref{Zl2} we obtain
\begin{equation}\label{V1}
V_1=\E|\Zl|^2=O_{\tau}(p^{-1}\eta^{-2}).
\end{equation}
Next we estimate $V_2$. Again by Lemma \ref{Zl2} we have
\begin{align*}
\E^{(\l)}\Yl-\E\Yl
&=-\left(\frac{1}{p}+\frac{z}{\sqrt{pn}}\right)(\Tr \Gl-\E\Tr \Gl).
\end{align*}
So we have
\begin{align}\label{V2}
V_2&=\left|\frac{1}{p}+\frac{z}{\sqrt{pn}}\right|^2\E|\Tr \Gl-\E\Tr \Gl|^2\nonumber\\
&=\left|\frac{1}{p}+\frac{z}{\sqrt{pn}}\right|^2\sum_{m \neq \l}\E\left|\E^{(T_{m-1})}\Tr \Gl-\E^{(T_m)}\Tr \Gl\right|^2.
\end{align}
Here we denote $T_0:=\emptyset$ and $T_m:=[1\isep m]$ for any $m \in [1\isep p]$, and for any subset $T \subset [1\isep p]$, we denote $\E^{(T)}$ to be the conditional expectation given $\{x_{jk}: j \notin T\}$. The second equality follows from applying successively the law of total variance to the rows of $\Phi_n$.

For $m \neq \l$, writing $\gamma_m:=\E^{(T_{m-1})}\Tr \Gl-\E^{(T_m)}\Tr \Gl$, we can easily check that
$$\gamma_m=\E^{(T_{m-1})}\sigma_m-\E^{(T_m)}\sigma_m, $$
where $\sigma_m:=\Tr \Gl-\Tr G^{(\l,m)}$. By (\ref{Interlacing}) we have $|\gamma_m| \leq C\eta^{-1}$. Hence we obtain
$$V_2 \leq C \left(\frac{1}{p^2}+\frac{|z|^2}{pn}\right)\left(\frac{p}{\eta^2}\right) \leq \frac{C_{\tau}}{p\eta^2}.$$

Plugging the estimates of $\E\Yl$ in statement (a), $V_1$ in (\ref{V1}) and $V_2$ above into the equation (\ref{Yl3}), we obtain the desired estimate of $\E|\Yl|^2$.
\end{proof}
\subsection{Proof of Theorem \ref{thm2}}
We can now complete the proof of Theorem \ref{thm2}.
\begin{proof}[Proof of Theorem \ref{thm2}]
We write (\ref{diagonal2}) as
\begin{eqnarray} \label{4:gll} G_{\l\l}=\frac{1}{\an+\Yl},\end{eqnarray}
where
$$\an=-z-s_n(z).$$
Taking expectations on both sides of (\ref{4:gll}), we can obtain
\begin{equation}\label{diagonal3}
\E G_{\l\l}=\frac{1}{\an}+\Al=\frac{1}{\an+\dl},
\end{equation}
where
\begin{equation}\label{Al}
\Al=\E\left(\frac{1}{\an+\Yl}\right)-\frac{1}{\an}=-\frac{1}{\an^2}\E \Yl+\frac{1}{\an^2}\E \left(\frac{\Yl^2}{\an+\Yl}\right),
\end{equation}
and
\begin{equation}\label{dl}
\dl=\left(\frac{1}{\an}+\Al\right)^{-1}-\an=-\frac{\an^2 \Al}{1+\an \Al}.
\end{equation}
For $\Al$, since
\begin{eqnarray*} \left|G_{\l\l}\right|=\left|\frac{1}{\an+\Yl}\right| \le \eta^{-1},\end{eqnarray*}
we obtain
\begin{equation}\label{Al2}
\left|\an^2 \Al \right|=\left|-\E \Yl+\E\frac{\Yl^2}{\an+\Yl}\right|\leq |\E \Yl|+\frac{1}{\eta}\E|\Yl|^2.
\end{equation}
For $\dl$, using the fact that $|\an| \ge \eta$ and Lemma \ref{Yl2} we obtain
\begin{eqnarray*} \label{4:dl} \left|\dl \right| \leq \frac{C_{\tau}}{\eta^3}\left(\frac{1}{p}+\sqrt\frac{p}{n}\right) \end{eqnarray*}
for any $z \in \st$.

Summing for all $\l \in [1\isep p]$ and then dividing $p$ on both sides of (\ref{diagonal3}), it is easy to see that in writing
\[s_n(z)= \frac{1}{\alpha_n+\Delta(z)},\]
the quantity $\Delta(z)$ satisfies the same bound as $\dl$ above. This completes the proof of Theorem \ref{thm2}.
\end{proof}


\section*{Acknowledgments}
The research of M. Xiong was supported by RGC grant number 16303615 from Hong Kong.


\begin{thebibliography}{99}
\bibitem{Tarokh1} B. Babadi, S. S. Ghassemzadeh and V. Tarokh, ``Group randomness properties of pseudo-noise and Gold sequences,'' \emph{Proc. 12th Can. Workshop Inf. Theory (CWIT)} (2011), 42--46.

\bibitem{Tarokh2} B. Babadi and V. Tarokh, ``Spectral distribution of random matrices from binary linear block codes,'' \emph{IEEE Trans. Inf. Theory} \textbf{57} (2011), no. 6, 3955--3962.

\bibitem{Tarokh3} B. Babadi and V. Tarokh, ``Spectral distribution of product of pseudorandom matrices formed from binary block codes'', \emph{IEEE Trans. Inform. Theory} \textbf{59} (2013), no. 2, 970--978.

\bibitem{ConI} Z. Bai, ``Convergence Rate of Expected Spectral Distributions of Large Random Matrices. Part I. Wigner Matrices,'' \emph{The Annals of Probability} \textbf{21} (1993), no. 2, 625--648.


\bibitem{SA} Z. Bai and J. W. Silverstein, \emph{Spectral Analysis of Large Dimensional Random Matrices}, 2nd ed. New York, NY 10013, USA: Springer Series in Statistics, 2010.

\bibitem{BaiY} Z. Bai and Y. Yin, ``Convergence to the semicircle law,'' \emph{Ann. Probab.} {\bf 16} (1988), no. 2, 863--875.

\bibitem{Bao} Z. Bao, ``Strong convergence of ESD for the generalized sample covariance matrices when $p/n \to 0$'', \emph{Statist. Probab. Lett.} {\bf 82} (2012), no. 5, 894--901.

\bibitem{Local law} F. Benaych-Georges and A. Knowles, \emph{Lectures on the local semicircle law for Wigner matrices}, 2016.


\bibitem{Blon} C. Blondean and K. Nyberg, ``Perfect nonlinear functions and cryptography,'' Finite Fields Appl. {\bf 32} (2015), 120-147.





\bibitem{CSC} C. Chan, E. Kung and M. Xiong, ``Random Matrices from Linear Codes and Wigner's Semicircle Law,'' \emph{IEEE Trans. Inform. Theory} \textbf{65} (2019), no. 10, 6001--6009.

\bibitem{CTX} C. Chan, V. Tarokh and M. Xiong, ``Convergence Rate of Empirical Spectral Distribution of Random Matrices from Linear Codes'', \emph{arXiv:1902.08428}, 2019.

\bibitem{CHE} Y. Chen, N. Li and X. Zeng, ``A class of binary cyclic codes with generalized Niho exponents'', \emph{Finite Fields Appl.} {\bf 43} (2017), 123--140.

\bibitem{DIN1} C. Ding, ``Parameters of several classes of BCH codes'', \emph{IEEE Trans. Inform. Theory}, \textbf{61}, no. 10, 5322--5330, 2015.

\bibitem{DIN2} C. Ding, X. Du and Z. Zhou, ``The Bose and minimum distance of a class of BCH codes'', \emph{IEEE Trans. Inform. Theory} \textbf{61}, no. 5, 2351--2356, 2015.




\bibitem{Gold} R. Gold, ``Maximal recursive sequences with 3-valued recursive crosscorrelation
functions (Corresp.)'', \emph{IEEE Trans. Inform. Theory} \textbf{14} (1968), no. 1, 154--156.

\bibitem{FECC} W. C. Huffman and V. Pless, \emph{Fundamentals of Error-Correcting Codes}, Cambridge, UK: Cambridge University Press, 2003.


\bibitem{MET} M. L. Mehta, \emph{Random Matrices (Pure and Applied Mathematics)} \textbf{142}, 3rd ed. San Francisco, CA, USA: Academic, 2004.

\bibitem{econ} S. Pafka, M. Potter and I. Kondor, ``Exponential weighting
and random-matrix-theory-based filtering of financial covariance matrices
for portfolio optimization'' (2004). Available online at http://arxiv.org/abs/cond-mat/0402573.


\bibitem{POTT} A. Pott, ``Almost perfect and planar functions,'' \emph{Des. Codes Cryptogr.} {\bf 78} (2016), 141--195.

\bibitem{WigCode} I. Soloveychik, Y. Xiang and V. Tarokh, ``Pseudo-Wigner matrices'', \emph{IEEE Trans. Inform. Theory} \textbf{64} (2018), no. 4, part 2, 3170--3178.

\bibitem{WigM} I. Soloveychik, Y. Xiang and V. Tarokh, ``Symmetric pseudo-random matrices'', \emph{IEEE Trans. Inform. Theory} \textbf{64} (2018), no. 4, part 2, 3179--3196.

\bibitem{TAN} C. Tang, N. Li, Y. Qi, Z. Zhou and T. Helleseth, ``Linear codes with two or three weights from weakly regular bent functions'', \emph{IEEE Trans. Inform. Theory} \textbf{62} (2016), no. 3, 1166--1176.

\bibitem{TUL} A. M. Tulino and S. Verd\'u, ``Random matrix theory and wireless communications'', \emph{Commun. Inf. Theory} \textbf{1} (2004), no. 1, 1--182.

\bibitem{Wig} E. P. Wigner, ``Characteristic vectors of bordered matrices with infinite dimensions'', \emph{Ann. of Math.} \textbf{62} (1955), no. 2, 548--564.

\bibitem{Wis} J. Wishart, ``The generalised product moment distribution in samples from a normal multivariate popolation'', \emph{Biometrika} \textbf{20A} (1928), no. 1/2, 32--52.

\bibitem{OQBT} J. Xia and M. Xiong, ``On a Question of Babadi and Tarokh,''
\emph{IEEE Trans. Inf. Theory} \textbf{60} (2014), no. 11, 7355--7367.

\bibitem{Xie} J. Xie, ``Limiting spectral distribution of normalized sample covariance matrices with $p/n \to 0$,'' \emph{Statist. Probab. Lett.} {\bf 83} (2013), no. 2, 543--550.







\end{thebibliography}
\end{document}